\documentclass[a4paper,12pt,reqno]{amsart}
\usepackage[T1]{fontenc}
\usepackage[utf8]{inputenc}
\usepackage[libertinus,vvarbb]{newtx}
\usepackage[margin=1in]{geometry}
\usepackage{enumitem}
\usepackage{graphicx,color}
\usepackage{xurl}
\usepackage[bookmarksdepth=2]{hyperref}
\usepackage{booktabs}
\usepackage{multirow}

\usepackage[
backend=biber,
style=alphabetic,
sortcites=true,
sorting=nyt,
maxbibnames=99, 
]{biblatex}
\numberwithin{equation}{section}

\newtheorem{theorem}{Theorem}[section]
\newtheorem{remark}[theorem]{Remark}
\newtheorem{lemma}[theorem]{Lemma}
\newtheorem{corollary}[theorem]{Corollary}

\theoremstyle{definition}
\newtheorem*{defn}{Definition}

\theoremstyle{remark}

\DeclareMathOperator{\curv}{H}

\newcommand{\ind}{\chi}

\newcommand{\R}{\mathbb{R}}
\newcommand{\Z}{\mathbb{Z}}

\renewcommand{\rho}{\varrho}
\renewcommand{\epsilon}{\varepsilon}
\renewcommand{\theta}{\vartheta}

\renewcommand{\le}{\leqslant}
\renewcommand{\ge}{\geqslant}

\renewcommand{\geq}{\ge}
\renewcommand{\leq}{\le}
\begin{document}

\title[Volumetric density estimates for nonlocal minimal surfaces]{Volumetric density estimates for nonlocal minimal surfaces}

\author{Mateusz Kwaśnicki}
\thanks{Work supported by the National Science Centre, Poland, grant no.\@ 2023/49/B/ST1/04303}
\address{Mateusz Kwaśnicki \\ \textnormal{Department of Analysis and Stochastic Processes \\ Wrocław University of Science and Technology \\ Wybrzeże Wyspiańskiego 27 \\ 50-370 Wrocław, Poland}}
\email{\href{mailto:mateusz.kwasnicki@pwr.edu.pl}{\textsf{mateusz.kwasnicki@pwr.edu.pl}}}

\author[J. Thompson]{Jack Thompson}
\address{Jack Thompson \\ \textnormal{The University of Western Australia (M019) \\ 35 Stirling Highway \\ Perth WA 6009, Australia}}
\email{\href{mailto:jack.thompson@uwa.edu.au}{\textsf{jack.thompson@uwa.edu.au}}}
\date{\today}
\subjclass[2020]{%
35R09, 49Q05, 53A10%
}
\keywords{Nonlocal minimal surfaces, density estimates}

\begin{abstract}
In this article, we prove that viscosity subsolutions to nonlocal mean curvature-type equations satisfy universal volumetric estimates at all scales. Our results hold for general symmetric kernels that are comparable to the fractional Laplacian. Furthermore, we prove that subsolutions with low density (with respect to a universal constant) necessarily have `fat boundary', that is, have topological boundary with positive Lebesgue measure. 

\end{abstract}

\maketitle
\thispagestyle{empty}

%
%
\section{Introduction}

Minimal surfaces, famously arising as a model for soap films, appear ubiquitously in the applied sciences including in chemistry, materials science, biology, general relativity, and even architectural design. Their analysis is a rich and challenging topic, and has required the development of many techniques that are now widely used across geometry, topology, measure theory, and the analysis of partial differential equations. Of particular significance is the regularity theory for minimal surfaces which has seen significant progress over the past century and is still currently an active area of research. Some references on the foundational theory of minimal surfaces include \cite{maggi_sets_2012,colding_course_2011,white_lectures_2016}.

More recently, over the last decade and a half, there has been a consolidated interest in nonlocal or fractional minimal surfaces which may be viewed as a non-infinitesimal generalisation of the classical minimal surfaces. In applications, nonlocal minimal surfaces arise naturally in phase transition models where the underlying diffusion can exhibit long-range jumps, see \cite{dipierro_perspectives_2023}. For general surveys on nonlocal minimal surfaces and their regularity theory, see \cite{dipierro_nonlocal_2018,cinti_regularity_2022,serra_nonlocal_2024}.

The study of nonlocal minimal surfaces began with the celebrated paper of Caffarelli, Roquejoffre, Savin \cite{caffarelli_nonlocal_2010} where they introduced so-called \(s\)-minimal surfaces, the prototypical example of a nonlocal minimal surface. Concretely, if \(s\in (0,1)\) then the \emph{\(s\)-perimeter} of a measurable set \(E\subset \R^d\) in a bounded open set \(\Omega\subset \R^d\) is defined as \begin{equation}
    \operatorname{Per}_s(E;\Omega) = \frac 1 4  \iint_{ Q(\Omega)}  \frac{\lvert \tilde \chi_E(x)-\tilde \chi_E(y)  \rvert}{\lvert x-y\rvert^{d+s}}  \, dy \, dx . \label{rKeVDPTi}
\end{equation} Here  \begin{equation*}
     Q(\Omega) = \R^{2d}\setminus \bigl ( (\R^d\setminus  \Omega) \times (\R^d\setminus  \Omega) \bigr ) = \bigl ( \Omega \times \Omega \bigr ) \cup \bigl ( \R^d\setminus \Omega \times \Omega \bigr ) \cup \bigl ( \Omega \times \R^d\setminus \Omega \bigr ),
\end{equation*} and \(\tilde \chi_E=\chi_{\R^d\setminus E} - \chi_E\) where \(\chi_F\) denotes the characteristic function of a Borel set \(F\). Furthermore, a minimiser~\(E\) of the \(s\)-perimeter satisfies the equation\begin{equation}
    \curv_{s,E} = 0 \quad \text{on } \partial E, \label{ZC9WGPJu}
\end{equation}
where
\begin{equation}
\curv_{s, E}(x) =\lim_{\varepsilon \rightarrow 0^+} \int_{\R^d \setminus B_\varepsilon(x)} \frac{\tilde \chi_E(y)}{\lvert x - y\rvert^{d + s}} \,d y, \qquad x\in \partial E \label{SswSVJj9}
\end{equation} provided this limit exists. In analogy with the theory of classical minimal surfaces, the object \(\curv_{s, E}\) is often called the \emph{\(s\)-mean curvature} and solutions to~\eqref{ZC9WGPJu} are called \emph{\(s\)-minimal surfaces}\footnote{In \cite{caffarelli_nonlocal_2010}, \(s\)-minimal surfaces are defined as \emph{minimisers} of the \(s\)-perimeter. In this article, however, an \(s\)-minimal surface always refers to a stationary solution of the \(s\)-perimeter (equivalently a solution to~\eqref{ZC9WGPJu} for sufficiently regular sets) in line with the terminology used in classical differential geometry. We also adopt this terminology for more general kernels.}. More broadly, we use the terms `nonlocal perimeter', `nonlocal mean curvature', and `nonlocal minimal surface' to refer to analogues of~\eqref{rKeVDPTi},~\eqref{ZC9WGPJu}, and~\eqref{SswSVJj9} respectively with more general kernels, see~\S\ref{JG0TklX4} for details.

\subsection{Density estimates}

In the regularity theory of classical and nonlocal minimal surfaces, a central role is played by density estimates which are estimates describing quantitatively how `close' portions of a surface can be to one another. The term `density estimate' can refer to one of several different, but related inequalities. Indeed, given an open set \(E\subset \R^d\) with \(0\in \partial E\), a \emph{volumetric density estimate} refers to the inequality \begin{equation}
    \min \{ \lvert E \cap B_\rho \rvert , \lvert B_\rho \setminus E \rvert \} \ge C\rho^d  , \qquad \rho >0 \label{VDE} \tag{V}
\end{equation} for some \(C >0\). Moreover, a \emph{surface density estimate from above} (also called a \emph{perimetric density estimate from above}) refers to the inequality \begin{equation}
    \operatorname{Per} (E ; B_\rho ) \le C \rho^{d-1}  , \qquad \rho >0 \label{USDE} \tag{SA}
\end{equation} and a \emph{surface density estimate from below} refers to the inequality \begin{equation}
    \operatorname{Per} (E ; B_\rho ) \ge C \rho^{d-1}  , \qquad \rho >0 \label{LSDE} \tag{SB}
\end{equation} for some \(C>0\). Of course, if \(E\) is sufficiently smooth then~\eqref{VDE},~\eqref{USDE}, and~\eqref{LSDE} all hold for \(\rho\) sufficiently small and \(C\) depending on the regularity of \(E\), so the difficulty lies in establishing these inequalities for all \(\rho>0\) and for a universal constant \(C>0\).

\subsubsection{Classical minimal surfaces}
If \(E\subset \R^d\), \(d\ge 2\), is a set of locally finite perimeter that is locally minimising with respect to the classical perimeter, it is known that~\eqref{VDE},~\eqref{USDE}, and~\eqref{LSDE} hold for all \(\rho>0\) and \(C=C(d)\), see, for example, \cite[Theorem 16.14]{maggi_sets_2012}. Indeed,~\eqref{USDE} follows from the inequality \begin{equation}
    \operatorname{Per}(E; B_\rho) \le \operatorname{Per}(B_\rho ; E) \label{AbmqKRv5}
\end{equation} which holds due to the minimality of \(E\). Furthermore, one can establish~\eqref{VDE} using~\eqref{AbmqKRv5} in conjunction with the co-area formula and the isoperimetric inequality, and~\eqref{VDE} implies~\eqref{LSDE} via the relative isoperimetric inequality: \begin{equation}
     \operatorname{Per} (E ; B_\rho ) \geq C(d) \bigl (  \min \{ \lvert E \cap B_\rho \rvert , \lvert B_\rho \setminus E \rvert \} \bigr ) ^{\frac{d-1}d }. \label{yzR232ke}
\end{equation}  

When \(\partial E\) is only assumed to be a minimal surface, that is \(E\) is stationary with respect to the perimeter, then the validity of~\eqref{VDE},~\eqref{USDE}, and~\eqref{LSDE} changes. We still have that~\eqref{LSDE} is valid for minimal surfaces since \begin{equation*}
   \Phi_E: \rho \mapsto \rho^{-(d-1)}\operatorname{Per} ( E; B_\rho)  
\end{equation*} is monotone increasing and \(\lim_{\rho \to  0^+} \Phi_E(\rho) \geq C(d)>0\), see \cite[Theorem 17.16, Corollary 17.18]{maggi_sets_2012}. However,~\eqref{VDE} and~\eqref{USDE} are no longer true for all \(\rho>0\) and a universal \(C\). An easy counter-example for~\eqref{VDE} (with disconnected boundary) is given by the slab \( E = (0,1) \times \R^{d-1} \).  Alternatively, a counter-example with connected boundary is given by \(E\) such that \(\partial E\) is the catenoid when \(d=3\) or its higher dimensional analogue when \(d\geq 4\). A counter-example for~\eqref{USDE} is given by the periodic slab  \begin{equation}
    E_\delta =\R^{d-1} \times \bigcup_{k\in \Z} \bigl (2k\delta ,(2k+1)\delta \bigr ), \qquad \delta>0 \label{ySObyBJy}
\end{equation} since it has arbitrarily high perimeter in \(B_1\) provided \(\delta\) is sufficiently small. 

For stable or finite Morse index minimal surfaces, the validity of~\eqref{VDE},~\eqref{USDE}, and~\eqref{LSDE} remains the same as in the stationary case. Indeed, both the slab and the periodic slab are stable minimal surfaces, and the catenoid has finite Morse index, so they are still counter-examples to~\eqref{VDE} and~\eqref{USDE} under these additional assumptions. As far as we are aware, the validity of~\eqref{USDE} for stable, \emph{connected} minimal surfaces is unknown. We conclude our discussion on density estimates for classical minimal surfaces, by mentioning that for two-dimensional simply connected, immersed, stable minimal surfaces, an analogous inequality to~\eqref{LSDE}-\eqref{USDE} holds for geodesic balls. More precisely, let \(M^2\subset \R^3\) be a simply connected, minimal immersed surface. Then if \( \mathcal B_\rho \) is an open geodesic ball of radius \(\rho\) centred at \(0\in M\) such that \(\mathcal B_\rho \cap \partial M=\varnothing\) and \(\mathcal B_\rho\) is stable then \begin{equation}
    \pi \rho^2 \leq \mathcal H^2 ( \mathcal B_\rho ) \leq \frac 43 \pi \rho^2 \label{GAcMkXQd}
\end{equation} see \cite[Corollary 2.2]{colding_estimates_2002} and \cite{pogorelov_stability_1981}. See also \cite[Theorem 3.4]{meeks_proofs_2005} and \cite[Lemma 34]{white_lectures_2016}. The validity of~\eqref{GAcMkXQd} in higher dimensions (with different constants) is currently unknown.

\subsubsection{Nonlocal minimal surfaces}
Now, we turn our attention to density estimates for nonlocal minimal surfaces. First, we will discuss the results for \(s\)-minimal surfaces, then we will mention more general kernels. For minimisers of the \(s\)-perimeter,~\eqref{VDE},~\eqref{USDE}, and~\eqref{LSDE} all hold for a universal \(C\) and for all \(\rho>0\). Indeed, for minimisers of the \(s\)-perimeter,~\eqref{VDE} is proved in the original paper of Caffarelli, Roquejoffre, Savin \cite[Theorem 4.1]{caffarelli_nonlocal_2010} following a strategy inspired by the classical case. Furthermore, this implies~\eqref{LSDE} via~\eqref{yzR232ke} as in the classical case. 

For~\eqref{USDE}, this was first established in \cite[Theorem 1.7]{cinti_quantitative_2019} for \(s\)-minimal surfaces that are stable under outer rearrangements\footnote{In this article, for both classical and nonlocal perimeters, a stable set is a stationary set with non-negative second variation with respect to inner variations, for example, as in \cite[Chapter 17]{maggi_sets_2012}. In \cite{cinti_quantitative_2019}, an alternative ad hoc definition of stability (referred to here as stable under outer rearrangements) was used. This notion of stability is equivalent to the usual notion of stability for \(C^2\) sets, but a stronger notion for less regular sets. For a general discussion regarding these two definitions see \S2 in \cite{cabre_stable_2020} and \S1.2 in \cite{caselli_stable_2025}.\label{YpIoIIl7}}. We emphasise that this is a surprising result since it is a control on the classical perimeter which is stronger than a control on the \(s\)-perimeter, the energy for \(s\)-minimal surfaces. The result of \cite{cinti_quantitative_2019} was later extended to the case of \(s\)-minimal surfaces with finite Morse index in \cite[Theorem 1.26]{caselli2025yausconjecturenonlocalminimal}. Furthermore,~\eqref{USDE} does not hold for general \(s\)-minimal surfaces with~\eqref{ySObyBJy} providing a counter-example. 

For~\eqref{LSDE}, this holds for all \(s\)-minimal surfaces which was proved in \cite{thompson_density_2026}. The proof relies on the monotonicity formula for \(s\)-minimal surfaces, see \cite[Theorem 3.4]{caselli_fractional_2024}, and an interpolation inequality between the volume, the perimeter, and the \(s\)-perimeter. 

The only remaining inequality is the volumetric estimate~\eqref{VDE} for non-minimising sets. In this case, it is known that~\eqref{VDE} holds for sets satisfying~\eqref{USDE}, for example stable/finite Morse index sets, see \cite[Proposition 2.9]{cozzi_halfspace_inprep}. Furthermore,~\eqref{VDE} is known to hold at small scales if \(0\in \partial E\) can be touched from inside or outside by a ball, see \cite[Proposition 2.10]{cozzi_halfspace_inprep}. The main result of the current article, in its most simple form, establishes~\eqref{VDE} for all sufficiently regular \(s\)-minimal surfaces: \begin{theorem} \label{TooFvLFR}
    Let \(s\in (0,1)\), \(\alpha\in (s,1)\), \(E\subset \R^d \) be an open set with \(C^{1,\alpha}\) boundary, and \(0\in \partial E\). If\, \(\curv_{s,E} =0\) on \(\partial E\) then \begin{equation*}
      \lvert E \cap B_R \rvert \geq C \lvert B_R\rvert \qquad \text{for all }R>0.
    \end{equation*} The constant \(C>0\) depends only on \(d\) and \(s\).
\end{theorem}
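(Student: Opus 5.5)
The plan is a contradiction argument at a well-chosen boundary point. Assume $\lvert E\cap B_R\rvert\le\epsilon\lvert B_R\rvert$ for some $R>0$ and some small $\epsilon=\epsilon(d,s)$ to be fixed. By scaling, since $\curv_{s,E}$ is homogeneous of degree $-s$, we may take $R=1$. We then look for a point $x\in\partial E$ at which $\curv_{s,E}(x)>0$, contradicting $\curv_{s,E}=0$. The assumption $\alpha>s$, with $C^{1,\alpha}$ boundary, is used only to guarantee that the principal value defining $\curv_{s,E}(x)$ converges at every boundary point. In particular it is finite at the touching points constructed below, so we may evaluate the equation there pointwise, essentially in the viscosity sense. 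Only the inequality $\curv_{s,E}\le0$ is used.

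\emph{Step 1 (choice of the point).} Consider the Hardy--Littlewood maximal function $M\chi_E$ restricted to $B_1$. By the weak-type $(1,1)$ bound, the set $\{y\in B_{1/4}: \sup_{\rho\le 1/2}\lvert E\cap B_\rho(y)\rvert/\lvert B_\rho\rvert>\delta\}$ has measure at most $C\epsilon/\delta$. Choosing $\delta=\delta(d,s)$ small and then $\epsilon\ll\delta$, we find $y\in B_{1/4}$ whose $E$-density is at most $\delta$ at all scales $\rho\le1/2$. Such a $y$ cannot lie in $E$, since $E$ is open. Set $r=\operatorname{dist}(y,E)$; then $0<r\le 1/4$ because $0\in\partial E$. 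Let $x\in\partial E$ be a nearest point, so that $B_r(y)\subset\R^d\setminus E$ touches $\partial E$ at $x$. For every $\rho\in[r,1/4]$ we have $B_\rho(x)\subset B_{2\rho}(y)$, and hence $\lvert E\cap B_\rho(x)\rvert\le 2^d\delta\lvert B_\rho\rvert$.

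\emph{Step 2 (estimating $\curv_{s,E}(x)$).} Split the integral into three regions.
\begin{itemize}
\item Far region $\R^d\setminus B_{1/4}(x)$: this contributes at least $-C(d,s)$.
\item Dyadic annuli $B_{2^{k+1}\eta r}(x)\setminus B_{2^k\eta r}(x)$ up to scale $1/4$, where $\eta\ge1$ is a parameter: on these the $E$-density is at most $2^{d}\delta$, so each annulus contributes at least $c(1-C\delta)(2^k\eta r)^{-s}$. Summing gives at least $c_1(\eta r)^{-s}-C$.
\item Local piece $B_{\eta r}(x)$: here we compare $\tilde\chi_E$ with $\tilde\chi_{B_r(y)}$. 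Because $B_r(y)\subset\R^d\setminus E$, we have $\tilde\chi_E\ge\tilde\chi_{\R^d\setminus B_r(y)}$ pointwise. The principal value of the latter over $B_{\eta r}(x)$ is an explicit quantity $-\kappa(\eta)r^{-s}$, which is finite because the ball is smooth and the half-space term cancels by symmetry.
\end{itemize}
Altogether we obtain $\curv_{s,E}(x)\ge \bigl(c_1\eta^{-s}-\kappa(\eta)\bigr)r^{-s}-C$.

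\emph{Step 3 (the main obstacle).} The difficulty is that both the favourable annular term and the unfavourable local term scale like $r^{-s}$. The argument therefore closes only if the local defect $\kappa(\eta)$ is strictly smaller than the gain $c_1\eta^{-s}$. Taking $\eta=1$, the defect $\kappa(1)$ measures how much of $B_r(x)$ lies outside $B_r(y)$ beyond a half-ball, weighted by $\lvert x-z\rvert^{-d-s}$. This excess region is a thin lens near $x$, so the curvature-type bound $\lvert\kappa\rvert\lesssim \rho^{1-s}/r$ at radius $\rho$ suggests that $\kappa(1)$ is a moderate dimensional constant. I would first try to verify directly that $\kappa(1)<c_1$.

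If the direct inequality fails, the fallback is to exploit the freedom in choosing $y$. Instead of taking $r=\operatorname{dist}(y,E)$, the idea is to use a touching ball of radius $Kr$ obtained by a sliding or maximisation argument: maximise $\operatorname{dist}(z,E)-\theta\lvert z-y\rvert$ over $z$ in the good set. Then $E^c$ contains a large ball tangent at $x$, so the lens defect becomes $O((\eta r)^{1-s}/(Kr))$, which is negligible for $K\gg\eta$. The low density at scales $\ge\eta r$ still supplies the positive $c_1(\eta r)^{-s}$ term. Once $c_1\eta^{-s}-\kappa>0$, we choose $\epsilon$ small enough to make $r$ small, forcing $\curv_{s,E}(x)>0$, a contradiction. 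The resulting constant depends only on $d$ and $s$.
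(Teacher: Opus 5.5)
Steps 1--2 are fine in outline, but Step 3 is a genuine gap, and the direct route you propose there fails. One small point in Step 1: to get $r>0$ you need $y\notin\overline E$, not just $y\notin E$. This holds because at a $C^1$ boundary point the density of $E$ tends to $1/2>\delta$.

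Your lower bound discards all information about $E$ inside $B_{\eta r}(x)$ and uses only $\eta\ge 1$. Hence the gain satisfies $c_1\le d\lvert B_1\rvert/s$ however small $\delta$ is, while the defect $\kappa(\eta)$ is a fixed geometric quantity. Shrinking $\delta$ or $\epsilon$ therefore cannot close the argument.

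The inequality $\kappa(1)<c_1$ is in fact false for $s$ near $1$. Write $B=B_r(y)$, $B'=B_r(2x-y)$ and $\nu=(x-y)/r$. In polar coordinates about $x$, the lens $B_r(x)\setminus(B\cup B')$ at radius $t$ consists of the directions with $\lvert\omega\cdot\nu\rvert\le t/(2r)$, so $\kappa(1)\ge c_d/(1-s)$. For instance, in $d=2$ you get $\kappa(1)\ge 2/(1-s)$ against $c_1\le 2\pi/s$, so $c_1-\kappa(1)<0$ for all $s\ge 4/5$. Increasing $\eta$ only makes this worse.

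Your concluding step is also unjustified. Nothing forces $r=\operatorname{dist}(y,E)$ to be small as $\epsilon\to0$, since $y$ is just some point of a set of measure $\ge\frac12\lvert B_{1/4}\rvert$. When $r\sim 1/4$, the far-field term $-C$ is not absorbed. The sliding fallback is not an argument either: at the new tangency point you have no source for low density at scales $\ge\eta r$ far below the new radius $Kr$.

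The missing idea is that smallness of $E$ at the scale of the touching ball propagates to smaller scales, and it must be used there. This matters because near $x$ the complement of $B$ carries a non-integrable singularity. For $\rho\le r$ you have $B_\rho(x)\subset B_{2r}(y)$, hence $\lvert E\cap B_\rho(x)\rvert\le 2^d\delta\lvert B_r\rvert$, which is small relative to $\lvert B_\rho\rvert$ whenever $\rho\gg\delta^{1/d}r$. Moreover, $B_r(x)\setminus B$ contains a half-ball at $x$, so $\int_{B_r(x)\setminus B}\lvert x-z\rvert^{-d-s}\,dz=+\infty$, while $E$ can occupy only measure $\lesssim\delta r^d$ of it.

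The paper exploits exactly this (Lemma~\ref{RchNzy7Y}). Start from the exact identity
\[
\curv_{s,E}(x)=-\curv_{s,B}(x)+2\int_{\R^d\setminus(E\cup B)}\lvert x-z\rvert^{-d-s}\,dz,\qquad \curv_{s,B}(x)=C(d,s)r^{-s}.
\]
The bathtub rearrangement of Lemma~\ref{UMd4Dp6e} then gives
\[
\int_{(B_r(x)\setminus B)\setminus E}\lvert x-z\rvert^{-d-s}\,dz\ge\tfrac12\int_{B_r(x)\setminus B_\rho(x)}\lvert x-z\rvert^{-d-s}\,dz,\qquad \rho\le 2(2\delta)^{1/d}r.
\]
This is a gain $\gtrsim\delta^{-s/d}r^{-s}$, which beats $C(d,s)r^{-s}$ for small $\delta$. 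No far-field term appears and $r$ need not be small.

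Within your own decomposition the fix is to take $\eta\simeq A\delta^{1/d}$ with $A$ large, instead of $\eta\ge1$. The annuli at scales $\ge\eta r$ still have $E$-density $\lesssim A^{-d}$, so the gain is $\simeq\eta^{-s}r^{-s}\simeq\delta^{-s/d}r^{-s}$. Meanwhile $\kappa(\eta)\lesssim\eta^{1-s}/(1-s)\to0$. Since $r\le 1/4$, the gain also dominates your far-field constant $C$.
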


For nonlocal minimal surfaces with respect to more general kernels, there are very few results, and they are primarily focused on~\eqref{USDE}. Indeed, in~\cite{cinti_quantitative_2019} they deal directly with a very broad class of kernel and establish~\eqref{USDE} for nonlocal minimal surfaces that are stable under outer rearrangements, see Footnote~\ref{YpIoIIl7}. Furthermore, in \cite{caselli2025yausconjecturenonlocalminimal,caselli_fractional_2024}, they establish~\eqref{USDE} for \(s\)-minimal sets with finite Morse index on a closed Riemannian manifold \(M\), see \cite[Theorem 1.26]{caselli2025yausconjecturenonlocalminimal}. In this setting, the homogeneous kernel in~\eqref{SswSVJj9} is replaced with the natural kernel on \(M\), defined in terms of the heat kernel, see \S1.3 in \cite{caselli2025yausconjecturenonlocalminimal}. We also mention that for kernels satisfying~\eqref{wqeJL6Xs}-\eqref{M4Xlk3oo}, the periodic slab~\eqref{ySObyBJy} demonstrates~\eqref{USDE} cannot hold without imposing further assumptions such as stability/finite Morse index.

This brings us to the validity of~\eqref{VDE} and~\eqref{LSDE} for general nonlocal minimal surfaces. In this article, we also establish~\eqref{VDE} for nonlocal minimal surfaces with bounded, symmetric, uniformly elliptic kernels, see Corollary~\ref{k9pGd2yU}. This immediately implies, via the relative isoperimetric inequality~\eqref{yzR232ke} that~\eqref{LSDE} holds for the same class of kernel, see Corollary~\ref{IP8ScC2r}. As we mentioned earlier,~\eqref{LSDE} was already established in \cite{thompson_density_2026} for \(s\)-minimal surfaces, but the proof relied on the monotonicity which is not available for general kernels.

\subsection{Main results} \label{JG0TklX4}
 To state our results precisely, let \(d\geq 1\), \(s\in (0,1)\), and \(K : \R^d \to \R\) be such that \begin{equation}
    \frac{\lambda}{\lvert y\rvert^{d+s}} \leq K(y) \leq \frac \Lambda {\lvert y \rvert^{d+s}} \text { with } 0<\lambda\leq \Lambda \label{wqeJL6Xs}
\end{equation} and \begin{equation}
    K(y)=K(-y) . \label{M4Xlk3oo}
\end{equation} Then, given a Borel set \(E\subset \R^d\) and an open bounded set \(\Omega\), the nonlocal perimeter associated with the kernel \(K\) in \(\Omega\) is given by \begin{equation*}
    \operatorname{Per}_K(E;\Omega) = \frac 1 4  \iint_{ Q(\Omega)}  \lvert \tilde \chi_E(x)-\tilde \chi_E(y)  \rvert \, K(x-y) \, dy \, dx .
\end{equation*} Here  \begin{equation*}
     Q(\Omega) = \R^{2d}\setminus \bigl ( \R^d\setminus  \Omega \times \R^d\setminus  \Omega \bigr ) = \bigl ( \Omega \times \Omega \bigr ) \cup \bigl ( \R^d\setminus \Omega \times \Omega \bigr ) \cup \bigl ( \Omega \times \R^d\setminus \Omega \bigr )
\end{equation*} and \(\tilde \chi_E=\chi_{\R^d\setminus E} - \chi_E\) where \(\chi_F\) denotes the characteristic function of a Borel set \(F\). Furthermore, the `full' nonlocal perimeter associated with \(K\) is \begin{equation*}
    \operatorname{Per}_K(E)=\operatorname{Per}_K(E;\R^d) = \int_{\R^d\setminus E} \int_E K(x-y) \,d y\,d x.
\end{equation*} In the particular case \(K(y)=\lvert y \rvert^{-(d+s)}\), we refer to \(\operatorname{Per}_K\) as the \(s\)-perimeter and write \(\operatorname{Per}_K=\operatorname{Per}_s\) as in~\eqref{rKeVDPTi}.

Next, we define the nonlocal mean curvature associated with the kernel \(K\) by \begin{equation}
    \curv_{K,E}(x) = \lim_{\varepsilon \to 0^+} \int_{\R^d \setminus B_\varepsilon(x)} \tilde \chi_E (y) K(x-y) \,d y \label{DVf4VFSd} , \qquad x\in \partial E
\end{equation} provided this limit exists. This definition is motivated by the fact that for \(E\) sufficiently regular, say \(C^2\), the limit in~\eqref{DVf4VFSd} exists for all \(x\in \partial E\) and the Euler-Lagrange equation for \(\operatorname{Per}_K(E)\) is given by \begin{equation}
    \curv_{K,E} = 0 \qquad \text{on }\partial E, \label{seCTuwK3}
\end{equation} see \cite[Theorem 6.1]{figalli_isoperimetry_2015}, in analogy to classical minimal surfaces. Furthermore, a sufficiently regular set \(E\) is referred to as a nonlocal minimal surface (with respect to \(K\)) if it satisfies~\eqref{seCTuwK3} in the pointwise sense. As with the nonlocal perimeter, in the particular case \(K(y)=\lvert y \rvert^{-(d+s)}\), we refer to \(\curv_{K,E}\) as the \(s\)-mean curvature, and solutions to~\eqref{seCTuwK3} as \(s\)-minimal surfaces.

For rough sets,~\eqref{seCTuwK3} can no longer be understood in the pointwise sense, so it must be reinterpreted in a weak formulation. One such weak formulation is the notion of viscosity solutions to nonlocal mean curvature equations, and this is definition of solution we adopt in this article. We only require the definition of a viscosity subsolution, which we give below:

\begin{defn}[Viscosity subsolutions]
    Let \(\Omega\subset \R^d\) be an open set, \(E\subset \R^d\) be a Borel set\footnote{We do not identify sets up to Lebesgue measure zero, which is a common convention in the literature.}, and \(C_0\in \R\). We say that \(\curv_{K,E} \leq C_0\) in \(\Omega\) in the viscosity sense if for all \(x\in \partial E \cap \Omega\) with an exterior touching ball (i.e. there exists a ball \(B \subset \R^d\setminus E\) with \(x\in \partial B\)) we have \(\curv_{K,E}(x) \leq C_0\).
\end{defn}

Note that the notion of viscosity solution and pointwise solution coincide for sufficiently regular sets. Moreover, for the above definition to make sense, one is required to know \emph{a priori} that~\(\curv_{K, E}(x)\) is well-defined in the principal value sense (albeit possibly infinite) whenever~\(E\) is touched by an exterior ball. We address this and other technical details in Section~\ref{vLKCoqef}. Then our first main result establishes volumetric density estimates for viscosity subsolutions of nonlocal mean curvature equations.

\begin{theorem} \label{Waso0CUq}
    Let \(M\geq 0\), \(K\) satisfy~\eqref{wqeJL6Xs}--\eqref{M4Xlk3oo}, and \(E\subset \R^d \) be a Borel set such that \(\partial E\) has zero Lebesgue measure and \(0\in \partial E\). If\, \(\curv_{K,E} \leq M R^{-s}\) in \(B_{R/2}\) in the viscosity sense then \begin{equation*}
      \lvert E \cap B_R \rvert \geq \delta \lvert B_R\rvert .
    \end{equation*} The constant \(\delta>0\) depends only on \(d\), \(s\), \(\lambda\), \(\Lambda\), and \(M\).
\end{theorem}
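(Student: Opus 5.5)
We argue by contradiction after rescaling. Both the hypothesis and the conclusion are invariant under $x\mapsto x/R$: $K$ is replaced by $R^{d+s}K(R\,\cdot)$, which still satisfies \eqref{wqeJL6Xs}--\eqref{M4Xlk3oo}, and $\curv$ scales like $R^{-s}$. So we may take $R=1$ and assume $\curv_{K,E}\le M$ in $B_{1/2}$ in the viscosity sense. Suppose $|E\cap B_1|\le\delta|B_1|$ with $\delta$ small, to be fixed. The plan is to find a point $x\in\partial E\cap B_{1/2}$ that has an exterior touching ball and at which $\curv_{K,E}(x)>M$, contradicting the definition of viscosity subsolution.

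\emph{Choice of the touching point.} Let $\mathcal M$ be the Hardy--Littlewood maximal function of $\chi_{E\cap B_1}$. By the weak $(1,1)$ inequality, $|\{\mathcal M>\tau\}|\le C_d\delta/\tau$. Since $\partial E$ is Lebesgue-null, $|B_{1/4}\setminus \overline E|\ge |B_{1/4}|-\delta|B_1|$. Hence for $\delta\ll\tau$ we can pick $z\in B_{1/4}\setminus\overline E$ with $\mathcal M(z)\le\tau$. Set $r=\operatorname{dist}(z,\overline E)$. Then $0<r\le|z|<1/4$, because $0\in\partial E\subset\overline E$. The open ball $B_r(z)$ is disjoint from $\overline E$, and it touches $\overline E$ at some point $x$ with $|x|<1/2$. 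This $x$ lies in $\overline E$ and is a limit of points of $B_r(z)\subset\R^d\setminus E$, so $x\in\partial E\cap B_{1/2}$, and $B_r(z)$ is an exterior touching ball at $x$.

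\emph{Lower bound on $\curv_{K,E}(x)$.} Let $\rho=\varepsilon r$ with $\varepsilon$ small.
\begin{itemize}[leftmargin=*]
\item Inside $B_\rho(x)$: we have $\tilde\chi_E\ge\tilde\chi_{\R^d\setminus B_r(z)}$, where the latter equals $+1$ on the ball and $-1$ off it. Together with the symmetry \eqref{M4Xlk3oo}, the standard computation for a ball gives
\[
\mathrm{p.v.}\int_{B_\rho(x)}\tilde\chi_E K \ge -C\Lambda\rho^{1-s}r^{-1}=-C\varepsilon^{1-s}r^{-s}.
\]
The same argument shows that the principal value in \eqref{DVf4VFSd} exists in $(-\infty,+\infty]$.
\item Outside $B_\rho(x)$: we have
\[
\int_{B_\rho(x)^c}\tilde\chi_E K \ge c\lambda\rho^{-s}-2\Lambda\int_{E\setminus B_\rho(x)}|x-y|^{-d-s}\,dy .
\]
For $y\in E$ with $|x-y|\ge\rho$, we have $|z-y|\le|x-y|+r\le(1+\varepsilon^{-1})|x-y|$, so $|x-y|^{-d-s}\le C_\varepsilon|z-y|^{-d-s}$. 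Since also $E\cap B_r(z)=\varnothing$, we split into the part of $E$ in $B_1$ and the part outside. The part outside $B_1$ contributes at most $C$, because $|x-y|\ge1/2$ there. For the part in $B_1$, decompose into dyadic shells $2^kr\le|z-y|<2^{k+1}r$:
\[
\int_{E\cap B_1\setminus B_r(z)}|z-y|^{-d-s}\,dy\le\sum_{k\ge0}\frac{|E\cap B_{2^{k+1}r}(z)|}{(2^kr)^{d+s}}\le C\mathcal M(z)\,r^{-s}\sum_k2^{-ks}\le C\tau r^{-s}.
\]
\end{itemize}
Combining the two regions,
\[
\curv_{K,E}(x)\ge\bigl(c\lambda\varepsilon^{-s}-C\varepsilon^{1-s}-C_\varepsilon\tau\bigr)r^{-s}-C .
\]

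\emph{Choice of constants.} First choose $\varepsilon$ small so that $c\lambda\varepsilon^{-s}-C\varepsilon^{1-s}\ge 2(M+C+1)4^{-s}$. Then choose $\tau$ so that $C_\varepsilon\tau$ is at most half of that quantity. Finally choose $\delta$ small relative to $\tau$, so that the maximal-function step applies. Since $r^{-s}\ge4^s$, this forces $\curv_{K,E}(x)>M$, which is the desired contradiction. All constants depend only on $d,s,\lambda,\Lambda,M$.

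The step I expect to be the main obstacle is the selection of $z$. One needs a touching ball whose point of contact $x$ does not see $E$ concentrated at every scale $\ge\varepsilon r$. The comparison $|x-y|\lesssim_\varepsilon|z-y|$ reduces this to a condition at the centre $z$, and the maximal-function bound gives that condition. The assumption that $|\partial E|=0$ is used exactly to ensure that the good centres can be taken outside $\overline E$. Some care is also needed to justify that the principal value is well defined and that the ball comparison near $x$ holds for a general symmetric kernel.
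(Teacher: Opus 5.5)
Your proof is correct. Its skeleton is the paper's: the weak-$(1,1)$ bound gives a low-density centre $z\in B_{1/4}\setminus\overline E$ (the paper's set $D_\alpha$), and $\lvert\partial E\rvert=0$ is used exactly as you use it, to place that centre outside $\overline E$. The largest exterior ball $B_r(z)$ then gives a touching point $x\in\partial E\cap B_{1/2}$, and a lower bound on $\curv_{K,E}(x)$ contradicts the viscosity inequality.

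You depart from the paper in how that lower bound is proved, i.e.\ in the proof of Lemma~\ref{RchNzy7Y}.

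\emph{The paper's route.} It uses the identity of Lemma~\ref{JEUx52ma},
$\curv_{K,E}(x)=-\curv_{K,B}(x)+2\int_{\R^d\setminus(E\cup B)}K(x-y)\,dy$.
\begin{itemize}
\item In this formula $E$ enters only by shrinking a nonnegative integrand, so everything outside $B_r(x)$ can simply be discarded.
\item The term $\curv_{K,B}(x)$ is bounded by $C\Lambda r^{-s}$ via Lemma~\ref{XqTvAWsB}.
\item The rearrangement inequality of Lemma~\ref{UMd4Dp6e} turns smallness of $\lvert E\cap B_{2r}(z)\rvert$ into a gain of order $\rho^{-s}$, with $\rho\lesssim\beta^{1/d}r$.
\end{itemize}
So the paper needs the density of $E$ only at the single scale $2r$ around the centre, and it gets the scale-invariant local bound $(M+1)r^{-s}$.

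\emph{Your route.} You split at radius $\varepsilon r$ and control the inner principal value by the tangent-ball cancellation, with error $C\varepsilon^{1-s}r^{-s}$. You then pay, with a negative sign, for every piece of $E$ outside $B_{\varepsilon r}(x)$. This forces two extra ingredients:
\begin{itemize}
\item density control at all dyadic scales about $z$, which the maximal function supplies for free;
\item a separate far-field bound for $E\setminus B_1$.
\end{itemize}
As a result your final estimate $\curv_{K,E}(x)\ge Ar^{-s}-C$ is tied to the normalisation $R=1$ through $r<1/4$.

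\emph{Trade-off.} Your approach avoids the rearrangement lemma and the explicit ball-curvature comparison. The paper's approach isolates a cleaner single-scale lemma that is independent of the global picture.
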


Here and throughout this article, \(\lvert F\rvert \) always refers to the Lebesgue measure of \(F\). We emphasise that the notion of viscosity solutions makes sense for all Borel sets and so solutions could, \emph{a priori}, be very irregular. Indeed, there is no reason that a viscosity subsolution could not have boundary with positive Lebesgue measure, a situation we must explicitly rule out (via our assumptions) in Theorem~\ref{Waso0CUq}. In fact, the assumption in Theorem~\ref{Waso0CUq} that a subsolution has boundary with zero Lebesgue plays a critical role in the proof and we explore this peculiarity further in Theorem~\ref{thm:main} below. Next, we state an easy corollary of Theorem~\ref{Waso0CUq} which may be more readily applied in future articles.

\begin{corollary} \label{ED1qK30X}
    Let \(M\geq 0\), \(K\) satisfy~\eqref{wqeJL6Xs}--\eqref{M4Xlk3oo}, and \(E\subset \R^d \) be a Borel set such that \(\partial E\) has zero Lebesgue measure and \(0\in \partial E\). If\, \(\curv_{K,E} \leq M R^{-s}\) in \(B_R\) in the viscosity sense then \begin{equation*}
      \lvert E \cap B_r \rvert \geq \delta \lvert B_r\rvert  \qquad \text{for all }r\in (0,2R).
    \end{equation*} The constant \(\delta>0\) depends only on \(d\), \(s\), \(\lambda\), \(\Lambda\), and \(M\).
\end{corollary}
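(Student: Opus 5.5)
The plan is to deduce Corollary~\ref{ED1qK30X} from Theorem~\ref{Waso0CUq} by rescaling the radius. Fix \(r\in(0,2R)\). Theorem~\ref{Waso0CUq}, applied with the radius \(r\) in place of \(R\), gives \(\lvert E\cap B_r\rvert \ge \delta\lvert B_r\rvert\) as soon as two conditions hold:
\begin{equation*}
\partial E \text{ has zero Lebesgue measure, } 0\in\partial E, \quad\text{and}\quad \curv_{K,E}\le M r^{-s} \text{ in } B_{r/2} \text{ in the viscosity sense.}
\end{equation*}
The first condition is part of our hypotheses and does not depend on the scale. So the only thing to check is the curvature bound at scale \(r\).

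Since \(r<2R\), we have \(B_{r/2}\subset B_R\). The viscosity definition is local, so the hypothesis \(\curv_{K,E}\le MR^{-s}\) in \(B_R\) gives, at every point \(x\in\partial E\cap B_{r/2}\) with an exterior touching ball, \(\curv_{K,E}(x)\le MR^{-s}\). Here the size comparison goes the wrong way for us when \(r\) is near \(2R\). In that range \(r^{-s}\) can be smaller than \(R^{-s}\): we only know \(R^{-s}< 2^{s}r^{-s}\). Hence \(MR^{-s}\le 2^{s}M r^{-s}\), and this holds for every \(r\in(0,2R)\), whether \(r\) is below or above \(R\).

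We therefore apply Theorem~\ref{Waso0CUq} at radius \(r\) with the constant \(M\) replaced by \(M'=2^sM\). This yields \(\lvert E\cap B_r\rvert\ge \delta'\lvert B_r\rvert\) with \(\delta'=\delta(d,s,\lambda,\Lambda,2^sM)\), which depends only on \(d,s,\lambda,\Lambda,M\). Taking this \(\delta'\) as the constant in the corollary finishes the argument.

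There is no real obstacle here. The only points needing care are that the viscosity inequality restricts from \(B_R\) to the smaller ball \(B_{r/2}\), and that the factor \(2^s\) absorbs the scales \(r\in(R,2R)\). Since \(\delta\) is monotone in \(M\), one could also simply use \(\max\{M,2^sM\}=2^sM\).
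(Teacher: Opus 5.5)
Your proposal is correct and is exactly the paper's argument. You use \(B_{r/2}\subset B_R\) and \(MR^{-s}\le 2^sMr^{-s}\) for \(r<2R\), then apply Theorem~\ref{Waso0CUq} at radius \(r\) with \(M\) replaced by \(2^sM\).
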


Theorem~\ref{Waso0CUq} also implies that subsolutions in the entire space satisfy universal volumetric density estimates at all scales, which we state precisely in the following corollary. 

\begin{corollary} \label{k9pGd2yU}
    Let \(K\) satisfy~\eqref{wqeJL6Xs}--\eqref{M4Xlk3oo}, and \(E\subset \R^d \) be a Borel set such that \(\partial E\) has zero Lebesgue measure and \(0\in \partial E\). If\, \(\curv_{K,E} \leq 0\) in \(\R^d\) in the viscosity sense then \begin{equation*}
      \lvert E \cap B_r \rvert \geq \delta \lvert B_r\rvert  \qquad \text{for all }r>0 .
    \end{equation*} The constant \(\delta>0\) depends only on \(d\), \(s\), \(\lambda\), and \(\Lambda\).
\end{corollary}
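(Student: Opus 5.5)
The plan is to apply Theorem~\ref{Waso0CUq} directly, with \(M=0\) and an arbitrary scale. Fix \(r>0\) and set \(R=r\). Since \(\curv_{K,E}\le 0\) in \(\R^d\) in the viscosity sense, in particular \(\curv_{K,E}\le 0 = 0\cdot R^{-s}\) in \(B_{R/2}\) in the viscosity sense. This is because the viscosity definition only involves points of \(\partial E\cap\Omega\) admitting exterior touching balls, and restricting \(\Omega\) from \(\R^d\) to \(B_{R/2}\) only removes such points. The remaining hypotheses of Theorem~\ref{Waso0CUq} are assumed verbatim: \(K\) satisfies~\eqref{wqeJL6Xs}--\eqref{M4Xlk3oo}, \(E\) is Borel with \(\lvert\partial E\rvert=0\), and \(0\in\partial E\). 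The theorem then gives \(\lvert E\cap B_r\rvert\ge \delta\lvert B_r\rvert\), where \(\delta=\delta(d,s,\lambda,\Lambda,0)\).

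The constant \(\delta\) produced this way depends on \(M\), but \(M=0\) is fixed, so \(\delta\) depends only on \(d,s,\lambda,\Lambda\). It is also independent of \(r\), which is exactly the uniformity across all scales that is claimed. Since \(r>0\) was arbitrary, the corollary follows.

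I do not expect a real obstacle here. The only points to check are that the viscosity notion is monotone under shrinking the domain, and that the bound \(MR^{-s}\) with \(M=0\) is scale invariant. Both are immediate from the definitions given above. Alternatively, one could invoke Corollary~\ref{ED1qK30X} with \(M=0\) and \(R\) arbitrarily large, but the direct application of Theorem~\ref{Waso0CUq} is cleaner.
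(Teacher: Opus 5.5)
Your proposal is correct and matches the paper's approach: the paper gives no separate proof and presents this corollary as an immediate consequence of Theorem~\ref{Waso0CUq}. You take \(M=0\), so the bound \(MR^{-s}=0\) is the same at every scale and the hypothesis restricts to \(B_{R/2}\), and you apply the theorem with \(R=r\) for each \(r>0\), which yields \(\delta\) depending only on \(d,s,\lambda,\Lambda\).
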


Since the notion of viscosity solution and~\eqref{ZC9WGPJu} are equivalent for sets with \(C^{1,\alpha}\) boundary with \(\alpha \in (s,1)\), Corollary~\ref{k9pGd2yU} clearly implies Theorem~\ref{TooFvLFR}. In our final corollary of Theorem~\ref{Waso0CUq}, we show that general nonlocal minimal surfaces satisfy universal fractional surface density estimates from below. In particular, it establishes the validity of~\eqref{LSDE}.

\begin{corollary} \label{IP8ScC2r}
    Let \(\alpha\in (0,1]\), \(K\) satisfy~\eqref{wqeJL6Xs}--\eqref{M4Xlk3oo}, and \(E\subset \R^d \) be a Borel set such that \(\partial E\) has zero Lebesgue measure and \(0\in \partial E\). If\, \(\curv_{K,E} = 0\) in \(\R^d\) in the viscosity sense then \begin{equation*}
      \operatorname{Per}_\alpha (E;B_r) \geq \delta \operatorname{Per}_\alpha ( B_r)  \qquad \text{for all }r>0 .
    \end{equation*} The constant \(\delta>0\) depends only on \(d\), \(s\), \(\alpha\), \(\lambda\), and \(\Lambda\).
\end{corollary}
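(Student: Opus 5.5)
The plan is to get the volumetric estimate for both \(E\) and its complement, and then convert it into a lower bound on \(\operatorname{Per}_\alpha(E;B_r)\) through a fractional relative isoperimetric inequality in \(B_r\).

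First, the complement. Put \(F=\R^d\setminus E\). Then \(\tilde\chi_F=-\tilde\chi_E\), so \(\curv_{K,F}=-\curv_{K,E}\) wherever the principal value exists. Since \(\partial F=\partial E\), the set \(F\) is again Borel, has boundary of zero Lebesgue measure, and satisfies \(0\in\partial F\). I then need \(\curv_{K,F}\le 0\) in the viscosity sense. Here I read the hypothesis "\(\curv_{K,E}=0\) in the viscosity sense" as meaning that \(E\) is both a viscosity subsolution and a viscosity supersolution, with supersolution defined through interior touching balls. An interior touching ball for \(E\) is exactly an exterior touching ball for \(F\). Hence \(\curv_{K,F}\le 0\) in \(\R^d\) in the viscosity sense.

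Next, I apply Corollary~\ref{k9pGd2yU} to both \(E\) and \(F\). This gives
\[
\min\{\lvert E\cap B_r\rvert,\lvert B_r\setminus E\rvert\}\ge \delta_0\lvert B_r\rvert \qquad\text{for all } r>0,
\]
with \(\delta_0=\delta_0(d,s,\lambda,\Lambda)\).

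To pass to the perimeter, I use the fractional relative isoperimetric (Poincaré) inequality on the unit ball, which follows from compactness of \(W^{\alpha,1}(B_1)\hookrightarrow L^1(B_1)\). It reads
\[
\int_{B_1}\int_{B_1}\frac{\lvert u(x)-u(y)\rvert}{\lvert x-y\rvert^{d+\alpha}}\,dx\,dy \ge c(d,\alpha)\int_{B_1}\lvert u-\bar u\rvert\,dx .
\]
Take \(u=\chi_{E_1}\) with \(E_1=E/r\). Let \(m=\lvert E_1\cap B_1\rvert/\lvert B_1\rvert\), so \(\delta_0\le m\le 1-\delta_0\). The right-hand side then equals \(2m(1-m)\lvert B_1\rvert\ge \delta_0\lvert B_1\rvert\). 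The left-hand side is, up to the normalising factor, the \(B_1\times B_1\) part of \(\operatorname{Per}_\alpha(E_1;B_1)\), so it is bounded above by a constant multiple of \(\operatorname{Per}_\alpha(E_1;B_1)\).

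Finally, I rescale. The \(\alpha\)-perimeter is homogeneous of degree \(d-\alpha\), so \(\operatorname{Per}_\alpha(E;B_r)=r^{d-\alpha}\operatorname{Per}_\alpha(E_1;B_1)\), and the same holds for \(\operatorname{Per}_\alpha(B_r)\). Hence \(\operatorname{Per}_\alpha(E;B_r)\ge\delta\operatorname{Per}_\alpha(B_r)\). Here I read \(\operatorname{Per}_\alpha(B_r)\) as the full perimeter \(\operatorname{Per}_\alpha(B_r;\R^d)\), which is a finite multiple of \(r^{d-\alpha}\) for \(\alpha<1\).

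The case \(\alpha=1\) needs a separate reading: the \(\alpha\)-kernel is not integrable there, so I interpret \(\operatorname{Per}_1\) as the classical perimeter. In that case I use the classical relative isoperimetric inequality~\eqref{yzR232ke} in place of the fractional one, and the conclusion follows the same way.

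I expect the main obstacle to be the first step, deriving \(\curv_{K,F}\le0\) from the meaning of \(\curv_{K,E}=0\) in the viscosity sense. The paper defines only viscosity subsolutions, and the supersolution convention has to match the subsolution definition symmetrically under complementation. The isoperimetric step is standard.
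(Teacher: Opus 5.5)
Your proposal is correct and follows essentially the same route as the paper. The paper also passes to the complement through the interior-touching-ball (supersolution) half of the viscosity definition; that convention is exactly the one defined in Section~\ref{vLKCoqef}, so your concern there is resolved. It then applies Corollary~\ref{k9pGd2yU} to both \(E\) and \(\R^d\setminus E\), and converts the two-sided volume bound into a perimeter bound via the classical relative isoperimetric inequality when \(\alpha=1\) and a fractional Poincar\'e inequality on balls when \(\alpha\in(0,1)\). The only cosmetic difference is that you obtain that Poincar\'e inequality on \(B_1\) by compactness and rescale, whereas the paper quotes a scale-invariant version, with the factor \(r^\alpha\), from Bellido--Mora-Corral.
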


We mentioned above that the assumption that the subsolution has zero Lebesgue measure plays an important role in the proof of Theorem~\ref{Waso0CUq}. This is further evidenced in Theorem~\ref{thm:main} below where we prove that if a subsolution is sufficiently sparse then the sets of points in the boundary at which the nonlocal mean curvature exists (in the sense of~\eqref{DVf4VFSd}) and equals \(+\infty\) is a set of positive Lebesgue measure:

\begin{theorem}
\label{thm:main}
 Let \(M\geq 0\), \(K\) satisfy~\eqref{wqeJL6Xs}-\eqref{M4Xlk3oo}, and \(E\subset \R^d \) be a Borel set such that \(0\in \partial E\). There exists \(\delta>0\) such that if \,\(\curv_{K,E} \leq M R^{-s}\) in \(B_{R/2}\) in the viscosity sense and \begin{equation*}
        \lvert E \cap B_{R} \rvert < \delta  \lvert B_R\rvert 
    \end{equation*} then the set\begin{equation}
        \{ x \in \partial E \cap B_{R/4}  \text{ s.t. } \curv_{K,E}(x) = +\infty \} \label{botfPHsL}
    \end{equation} has positive Lebesgue measure. The constant \(\delta>0\) depends only on \(d\), \(s\), \(\lambda\), \(\Lambda\), and \(M\).
\end{theorem}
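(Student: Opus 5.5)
By scaling we will take \(R=2\). The natural route is to test the viscosity inequality at points of \(\partial E\) that admit exterior touching balls. The goal is to show that if \(E\) is sparse in \(B_2\), then at most such points the curvature is forced to be large and positive, contradicting \(\curv_{K,E}\le M2^{-s}\) in \(B_1\). The contradiction should only be avoidable on a set of boundary points of positive measure where the curvature is \(+\infty\).

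\emph{Step 1: contact points.} Assume \(|E\cap B_2|<\delta|B_2|\). For a radius \(r\) to be chosen later, we slide balls \(B_r(z)\subset\R^d\setminus E\) toward \(E\). Concretely, for each \(z\in B_{1/4}\) consider \(t(z)=\operatorname{dist}(z,E)\). Sparsity says that for most \(z\) the distance \(t(z)\) is comparable to \(\delta^{1/d}\) or less, while still being positive for almost every \(z\) outside \(\overline E\). The closest point \(x(z)\in\partial E\) is then touched from outside by the ball \(B_{t(z)}(z)\). This produces a family of touching points. As \(z\) ranges over a set of positive measure, I would aim for the contact points \(x(z)\) to also fill a set of positive measure; this is the analogue of the ABP/sliding-paraboloid measure estimate in Savin-type arguments. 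If instead \(\partial E\) is thin, the contact set collapses and we are in the regime of Theorem~\ref{Waso0CUq}. So the dichotomy is: either the contact set has positive measure, or we derive a contradiction.

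\emph{Step 2: curvature lower bound at a contact point.} At \(x=x(z)\) with exterior ball \(B=B_t(z)\), split
\[
\curv_{K,E}(x)=\int_{B_\rho(x)}\tilde\chi_E K+\int_{\R^d\setminus B_\rho(x)}\tilde\chi_E K .
\]
\begin{itemize}
\item On the far region, sparsity gives \(\tilde\chi_E=1\) except on a set of measure at most \(\delta|B_2|\). Using \eqref{wqeJL6Xs}, the far part is at least \(c\lambda\rho^{-s}-C\Lambda\delta\rho^{-d-s}-C\Lambda\), where the last term accounts for the tail outside \(B_2\).
\item On the near region, the exterior ball contributes a positive amount. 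Using symmetry \eqref{M4Xlk3oo}, pair \(y\) with \(2x-y\). The principal value exists as an extended real, and the near part is bounded below by \(-C\) times the measure of \(E\) on the far side of the tangent hyperplane, weighted by \(K\).
\end{itemize}
Choosing \(\rho\) small and then \(\delta\ll\rho^{d}\), this should force \(\curv_{K,E}(x)>M2^{-s}\), unless the near part is \(-\infty\)-like. The near part can be very negative only if \(E\) is dense near \(x\) on the opposite side of the hyperplane. In that case a blow-up ball inside \(E\) exists, but \(E\) has small density, and that is a contradiction at a smaller scale. I would therefore run the argument scale by scale, picking \(\rho\) adapted to \(x\).

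\emph{Step 3: conclusion.} The viscosity inequality gives \(\curv_{K,E}(x)\le M2^{-s}\) at every contact point in \(B_{1}\). Step 2 is therefore inconsistent unless the limit defining \(\curv_{K,E}(x)\) equals \(+\infty\), which is permitted only if the viscosity definition is vacuous there. That is not the case, so the viscosity inequality fails at every contact point. Hence the argument must show instead that the points where the estimate is meaningful lie in the interior of \(\partial E\). More honestly, the plan is to show that the set of \(x\in\partial E\cap B_{1/2}\) for which \(\tilde\chi_E=1\) a.e.\ near \(x\) (so \(E\) is locally null near \(x\)) has positive measure. At such points \(\curv_{K,E}(x)=+\infty\), because the integrand is \(K\ge\lambda|y|^{-d-s}\), which is not integrable. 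These are points of \(\partial E\) with Lebesgue density of \(E\) equal to zero. Sparsity plus the viscosity inequality, through Theorem~\ref{Waso0CUq}'s proof, show that a.e.\ contact point has \(|E\cap B_r(x)|=o(r^d)\). Combined with the Step 1 measure estimate, this gives positive measure.

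\emph{Main obstacle.} The hardest step is Step 1, the measure estimate showing that the contact points fill positive Lebesgue measure. If it fails, \(|\partial E|=0\) and Theorem~\ref{Waso0CUq} gives \(|E\cap B_2|\ge\delta_0|B_2|\), a contradiction once \(\delta<\delta_0\). So the cleanest plan is by contradiction: assume the set \eqref{botfPHsL} is null, and show that the proof of Theorem~\ref{Waso0CUq} only used \(|\partial E|=0\) at points where \(\curv_{K,E}=+\infty\). The conclusion would then follow.
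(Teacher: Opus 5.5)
There is a genuine gap. Your argument never produces a positive-measure set on which $\curv_{K,E}=+\infty$, and the set it aims at is the wrong one. Every point $x\in\partial E\cap B_{R/2}$ that admits an exterior touching ball satisfies $\curv_{K,E}(x)\le MR^{-s}<+\infty$ by hypothesis. So the contact points $x(z)$ of Step~1, which lie in $B_{R/2}$, can never belong to the target set. An ABP-type estimate showing that they fill positive measure is unproved, and it would not help anyway.

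Your claim in Step~3 that contact points satisfy $|E\cap B_r(x)|=o(r^d)$ is also incompatible with the hypotheses. Suppose $|E\cap B_r(x)|\le\gamma|B_r|$ for all $r\le r_0$, with $\gamma$ a small universal constant. Then each dyadic shell $B_{2^jr}(x)\setminus B_{2^{j-1}r}(x)$ contributes at least $c(2^jr)^{-s}$ to the truncated integral, so the truncations tend to $+\infty$. That is impossible at a contact point in $B_{R/2}$.

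The ``locally null'' set in Step~3 is not the right object either. Take an open dense $E\subset B_R$ of small measure with $0\notin E$; the viscosity hypothesis is vacuous for it, and the locally null set is empty inside $B_R$. Finally, ``the contact set is null, hence $|\partial E|=0$'' does not follow, and the closing plan only restates what has to be proved.

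Step~2 cannot be closed with the global bound $|E\cap B_R|<\delta|B_R|$ alone. The exterior ball of radius $t$ contributes a negative term of order $t^{-s}$, so the positive term $c\rho^{-s}$ must come from scales $\rho$ at most comparable to $t$. Beating the error $\delta\rho^{-d-s}$ then forces $\delta$ to be at most of order $t^d$, and $t$ can be arbitrarily small.

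What is missing is sparsity at \emph{every} scale around a positive-measure set of \emph{centres}. The paper obtains this from the Hardy--Littlewood maximal inequality. If $\delta\le\vartheta\alpha$, the set $D_\alpha$ of $z\in B_{R/4}$ with $|E\cap B_r(z)|\le\alpha|B_r|$ for all $r\in(0,R/2]$ has measure at least $\tfrac12|B_{R/4}|$.

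\begin{itemize}
\item Suppose some $z\in D_\alpha$ were outside $\overline E$. The largest ball $B_t(z)\subset\R^d\setminus E$ would touch $\partial E$ at some $x\in B_{R/2}$, with $t\le|z|<R/4$ because $0\in\partial E$.
\item Use the formula $\curv_{K,E}(x)=-\curv_{K,B}(x)+2\int_{\R^d\setminus(E\cup B)}K(x-y)\,dy$ together with a rearrangement inequality. The density bound in $B_{2t}(z)$ then gives $\curv_{K,E}(x)\ge(M+1)t^{-s}>MR^{-s}$, a contradiction.
\item Hence $D_\alpha\subset\overline E$. Since $\alpha<1$, no point of $D_\alpha$ is interior to $E$, so $D_\alpha\subset\partial E$.
\item Each $x\in D_\alpha$ has density at most $\alpha$ at all scales up to $R/2$. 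Choosing also $\alpha\le\gamma$, the dyadic-shell estimate gives $\curv_{K,E}(x)=+\infty$.
\end{itemize}

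It is the centres $z$, not the contact points, that form the required set, and no measure estimate for contact sets is needed.
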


Note carefully that it is not contradictory to have a set \(E\) satisfying an upper bound on its nonlocal curvature in \(B_{R/2}\) in the viscosity sense, but also have a non-empty set of points with infinite nonlocal curvature that is contained in \(B_{R/2}\) since the definition of viscosity solution does not see points that do not have an exterior touching ball. Hence, necessarily, almost every point in~\eqref{botfPHsL} does not have an exterior touching ball. Any open and dense subset of \(B_R\) with Lebesgue measure less than \(\delta \lvert B_R \rvert\) can serve as an example, as the definition of viscosity solution in \(B_{R/2}\) is void in this case.


\subsection{Organisation of paper}
In Section~\ref{vLKCoqef}, we give the definition of viscosity subsolutions for nonlocal curvature equations and establish some simple technical facts showing that this definition is well-defined. In Section~\ref{BQhFLLRN}, we give the proof of Theorem~\ref{Waso0CUq}, Corollary~\ref{ED1qK30X}, and Corollary~\ref{IP8ScC2r}. In Section~\ref{6yuqjipW}, we give the proof of Theorem~\ref{thm:main}.

\section{Preliminaries on viscosity solutions for general kernels} \label{vLKCoqef}

We begin this section by recalling the definition of viscosity (sub/super)solutions for nonlocal curvature equations associated with kernels satisfying~\eqref{wqeJL6Xs}--\eqref{M4Xlk3oo}. 

\begin{defn}[Viscosity solutions] 
    Let \(\Omega\subset \R^d\) be an open set, \(E\subset \R^d\) be a Borel set, and \(C_0\in \R\). We say that: \begin{itemize}
        \item \(\curv_{K,E} \leq C_0\) in \(\Omega\) in the viscosity sense if for all \(x\in \partial E \cap \Omega\) such that there exists a ball \(B \subset \R^d\setminus E\) with \(x\in \partial B\) we have \(\curv_{K,E}(x) \leq C_0\);
        \item \(\curv_{K,E} \geq C_0\) in \(\Omega\) in the viscosity sense if for all \(x\in \partial E \cap \Omega\) such that there exists a ball \(B \subset E\) with \(x\in \partial B\) we have \(\curv_{K,E}(x) \geq C_0\);
        \item \(\curv_{K,E} = C_0\) in \(\Omega\) in the viscosity sense if \(\curv_{K,E} \leq C_0\) in \(\Omega\) in the viscosity sense and \(\curv_{K,E} \geq C_0\) in \(\Omega\) in the viscosity sense.
        \end{itemize}
\end{defn}

In this article, we only require the definition of viscosity subsolutions, so we focus on subsolutions for the remainder of this section; however, one can easily state the analogous statements for supersolutions by replacing \(E\) with its complement. As we mentioned previously, the definition of viscosity subsolution only makes sense if one knows that~\(\curv_{K, E}(x)\) is well-defined in the principal value sense (albeit possibly infinite) whenever~\(E\) is touched at \(x\) by an exterior ball. For a proof of this fact see \cite[Section 4]{cabre_calibrations_2020}, see also \cite[Proposition 5.5]{moy_c1alpha_2025}. In Lemma~\ref{JEUx52ma} below, we restate these results in the form we require for our main results, and give an alternative proof. We also emphasise that, in the context of nonlocal minimal surfaces, there are several different equivalent definitions of viscosity solutions, we refer the interested reader to \cite[Section 2.1]{cozzi_halfspace_inprep}.

Before showing this, however, recall that, given a ball \(B\), \(\curv_{K,B}\) is defined in the principal value sense (i.e. that the limit~\eqref{DVf4VFSd} exists). Indeed, \cite[Proposition 6.1]{figalli_isoperimetry_2015} proves that \(\curv_{K,E}\) is defined and is continuous if \(E\) has \(C^{1,1}\) boundary. In the lemma below, we show \(\curv_{K,B}\) is comparable to the \(s\)-mean curvature of \(B\), and give the bounds explicitly.

\begin{lemma} \label{XqTvAWsB}
   Let \(K\) satisfy~\eqref{wqeJL6Xs}--\eqref{M4Xlk3oo} and \(B = B_r(x_0)\) be a ball. Then the limit \begin{equation}
        \curv_{K,B}(x)= \lim_{\varepsilon \to 0^+} \int_{\R^d\setminus B_\varepsilon} \tilde \chi_B(y) K(x-y) \,d y \label{hDeOAq9O}
   \end{equation} exists for all \(x\in \partial B\) and  \begin{equation*}
     \lambda r^{-s} \curv_{s,B_1}\!(e_1) \leq    \curv_{K,B_r(x_0)}(x) \le  \Lambda r^{-s} \curv_{s,B_1}(e_1) \qquad \text{for all }x\in \partial B,
   \end{equation*} where \begin{equation*}
       \curv_{s,B_1}\!(e_1) =  \lim_{\varepsilon \to 0^+} \int_{\R^d\setminus B_\varepsilon} \tilde \chi_{B_1(e_1)}(y) \frac{d y}{\lvert y\rvert^{d+s}}.
   \end{equation*} is the \(s\)-mean curvature of a unit ball.
\end{lemma}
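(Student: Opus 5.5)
The plan is to reduce to a radial decomposition around the boundary point $x$. By translation and scaling we may assume $x=0$ and $B=B_r(re_1)$, or more generally we work directly with $B_r(x_0)$ and $x\in\partial B$. The key observation is that $\tilde\chi_B$ has a cancellation structure along each line through $x$. For each direction $\theta\in S^{d-1}$, the ray $\{x+t\theta: t>0\}$ meets $B$ in a segment $(0,\ell(\theta))$, where $\ell(\theta)=2r(\theta\cdot\nu)_+$ and $\nu=(x_0-x)/r$ is the inner normal. We have $\ell(\theta)=0$ if $\theta\cdot\nu\le0$.

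We pair $\theta$ with $-\theta$, using the symmetry \eqref{M4Xlk3oo}, and write everything in polar coordinates. For $\varepsilon<2r$,
\[
\int_{\R^d\setminus B_\varepsilon(x)}\tilde\chi_B(y)K(x-y)\,dy=\int_{S^{d-1}}\int_\varepsilon^\infty \tilde\chi_B(x+t\theta)K(t\theta)t^{d-1}\,dt\,d\theta .
\]
Pairing $\theta$ with $-\theta$ turns the integrand into $\bigl(\tilde\chi_B(x+t\theta)+\tilde\chi_B(x-t\theta)\bigr)K(t\theta)$, integrated over half the sphere (or, equivalently, over the whole sphere with a factor $\tfrac12$). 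For $\theta$ with $\theta\cdot\nu>0$, the point $x-t\theta$ always lies outside $B$. So the sum equals $0$ for $t<\ell(\theta)$ and equals $2$ for $t>\ell(\theta)$.

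The integrand in $(t,\theta)$ is therefore nonnegative and supported in $\{t\ge\ell(\theta)\}$. Directions with $\theta\cdot\nu=0$ form a null set on the sphere. The truncated integrals are thus monotone in $\varepsilon$: for each fixed $\theta$ with $\ell(\theta)>0$ they are eventually constant, and on the null set of tangent directions they are irrelevant. By monotone convergence the limit \eqref{hDeOAq9O} exists in $[0,\infty]$ and equals
\[
\curv_{K,B}(x)=\int_{\{\theta\cdot\nu>0\}}\int_{\ell(\theta)}^\infty 2K(t\theta)t^{d-1}\,dt\,d\theta .
\]

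Now \eqref{wqeJL6Xs} bounds the integrand pointwise between $\lambda$ and $\Lambda$ times the same expression with $K(y)=|y|^{-(d+s)}$. That expression equals $\curv_{s,B_r(x_0)}(x)$ by the identical computation. This gives the two-sided bound immediately, since all integrands are nonnegative and no cancellation issues remain.

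Finally we compute the $s$-curvature expression. We have $\int_{\ell}^\infty 2t^{-1-s}\,dt=\tfrac2s\ell^{-s}$, so
\[
\curv_{s,B_r(x_0)}(x)=\tfrac{2}{s}(2r)^{-s}\int_{\{\theta\cdot\nu>0\}}(\theta\cdot\nu)^{-s}\,d\theta .
\]
This is finite because $s<1$ and is independent of $x$ and $\nu$ by rotation invariance. It scales like $r^{-s}$, so it equals $r^{-s}\curv_{s,B_1}(e_1)$; here the $x$ used for the unit-ball value is $0\in\partial B_1(e_1)$, as in the definition.

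The main care point is the polar pairing and interchange step. One has to note that the truncated integrand $\tilde\chi_B(x+t\theta)K(t\theta)$ is absolutely integrable on $\{t>\varepsilon\}$, because $|\tilde\chi_B|\le1$ and $K\le\Lambda|y|^{-d-s}$. This makes the polar rewriting and the symmetrisation legitimate before passing to the limit. The finiteness of $\int(\theta\cdot\nu)_+^{-s}d\theta$ requires only $s<1$.
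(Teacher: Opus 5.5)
Your proof is correct and is essentially the paper's argument. Pairing \(\theta\) with \(-\theta\) in polar coordinates is the same antipodal cancellation the paper gets from the substitution \(y\mapsto -y\), which reduces the truncated integral to \(\int K\) over \((B_2\setminus B_\varepsilon)\setminus(B_1(x_0)\cup B_1(-x_0))\); both then use monotone convergence, pointwise comparison with \(\lvert y\rvert^{-d-s}\), and scaling. Your explicit formula \(\curv_{s,B_r(x_0)}(x)=\tfrac{2}{s}(2r)^{-s}\int_{\{\theta\cdot\nu>0\}}(\theta\cdot\nu)^{-s}\,d\theta\) additionally proves finiteness and independence of \(x\) directly, which the paper instead takes from rotation invariance and the known value of \(\curv_{s,B_1}(e_1)\).
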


\begin{remark}
   One can compute \(\curv_{s,B_1}\!(e_1) \) explicitly in terms of the gamma function, denoted by \(\Gamma\), and it is given by \begin{equation*}
       \curv_{s,B_1} = \frac{2^{1-s}\pi^{\frac {d-1} 2 }  \Gamma \bigl ( \frac {1-s} 2  \bigr ) }{s\Gamma \bigl ( \frac{d-s}2\bigr )}. 
   \end{equation*} In particular, we have \( C^{-1} s^{-1}(1-s)^{-1}   \leq \curv_{s,B_1}\!(e_1) \leq Cs^{-1} (1-s)^{-1} \) when \(d\geq 2 \) and \( C^{-1} s^{-1}   \leq \curv_{s,B_1}\!(e_1) \leq Cs^{-1}  \) when \(d=1\), with \(C>0\) depending only on \(d\).
\end{remark}

\begin{proof}[Proof of Lemma~\ref{XqTvAWsB}]
First, we will prove the lemma in the particular case \(x_0 \in \partial B_1\), \(r=1\) and \(x=0\). The general case will follow by translating, rotating, and rescaling. Fix \(\varepsilon \in (0,1)\) and write \begin{equation*}
    \int_{\R^d\setminus B_\varepsilon} \tilde \chi _{B_1(x_0)}(y) K(y) \, dy = I_\varepsilon +J
\end{equation*} where \begin{equation*}
    I_\varepsilon = \int_{B_2 \setminus B_\varepsilon} \tilde \chi _{B_1(x_0)}(y) K(y) \, dy
\end{equation*} and \begin{equation*}
    J = \int_{ \R^d \setminus B_2 } \tilde \chi _{B_1(x_0)}(y) K(y) \, dy .
\end{equation*}
For \(I_\varepsilon\), we have that \begin{equation*}
    I_\varepsilon = \int_{(B_2 \setminus B_\varepsilon)\setminus B_1(x_0) }  K(y) \, dy  - \int_{(B_2 \setminus B_\varepsilon) \cap B_1(x_0)} K(y) \, dy .
\end{equation*} Making the change of variable \(y \to -y\) in the second integral above and using \eqref{M4Xlk3oo}, we obtain \begin{equation*}\begin{aligned}
    I_\varepsilon &= \int_{(B_2 \setminus B_\varepsilon)\setminus B_1(x_0) }  K(y) \, dy - \int_{(B_2 \setminus B_\varepsilon) \cap B_1(-x_0)}  K(y) \, dy \\
    &= \int_{(B_2 \setminus B_\varepsilon)\setminus(  B_1(x_0) \cup B_1(-x_0)) }  K(y) \, dy .
\end{aligned}\end{equation*} Hence, the monotone convergence theorem implies that \(\lim_{\varepsilon \to 0^+} I_\varepsilon\) exists, which further implies that the limit~\eqref{hDeOAq9O} exists. Moreover, by~\eqref{wqeJL6Xs}, we obtain \begin{equation*}
    I_\varepsilon \le  \Lambda \int_{(B_2 \setminus B_\varepsilon)\setminus(  B_1(x_0) \cup B_1(-x_0)) }  \frac{ dy}{\lvert y \rvert^{d+s}} = \Lambda  \int_{B_2 \setminus B_\varepsilon} \tilde \chi _{B_1(x_0)}(y) \frac{ dy } {\lvert y \rvert^{d+s}}
\end{equation*} Similarly, we also obtain \begin{equation*}
     I_\varepsilon \ge \lambda  \int_{B_2 \setminus B_\varepsilon} \tilde \chi _{B_1(x_0)}(y) \frac{ dy } {\lvert y \rvert^{d+s}} . 
\end{equation*}

The estimate for \(J\) is easy, since \(\tilde \chi _{B_1(x_0)}=1\) in \(\R^d\setminus B_2\), so~\eqref{wqeJL6Xs} immediately implies\begin{equation*}
   \lambda \int_{\R^d\setminus B_2} \tilde \chi _{B_1(x_0)}(y) \frac{ dy }{\lvert y \rvert^{d+s}} \le J \le \Lambda \int_{\R^d\setminus B_2} \tilde \chi _{B_1(x_0)}(y) \frac{ dy }{\lvert y \rvert^{d+s}}
\end{equation*} Hence, collecting all the estimates above, we obtain \begin{equation*}
    \lambda \int_{\R^d\setminus B_\varepsilon} \tilde \chi _{B_1(x_0)}(y) \frac{dy}{\lvert y \rvert^{d+s}} \le \int_{\R^d\setminus B_\varepsilon} \tilde \chi _{B_1(x_0)}(y) K(y) \, dy \le \Lambda \int_{\R^d\setminus B_\varepsilon} \tilde \chi _{B_1(x_0)}(y) \frac{dy}{\lvert y \rvert^{d+s}} . 
\end{equation*} Sending \(\varepsilon \to 0^+\) implies \begin{equation*}
    \lambda \curv_{s,B_1}\!(x_0) \le \curv_{K,B}(0) \le \Lambda \curv_{s,B_1}\!(x_0) ,
\end{equation*} and the result follows by invariance of \(\curv_{s,B_1}\!\) under rotations.
    
    For a general ball \(B = B_r(x_0)\), we make the change of variable \(y  = -r z +x\) to obtain\begin{equation*}
        \int_{\R^d \setminus B_{r\varepsilon}(x)} \tilde \chi_B (y) K(x-y) \,d y = \int_{\R^d \setminus B_\varepsilon } \tilde \chi_{B_1(\tilde x_0) }(z) \tilde K(z)  \,d z ,
\end{equation*} where \(\tilde x_0 = \frac{x-x_0}r \in \partial B\) and \(\tilde K(z) = r^d K(r z)\). Sending \(\varepsilon \to 0^+\), we get \begin{equation*}
    \curv_{K,B}(x) = \curv_{\tilde K,B_1(\tilde x_0)}(0) .
\end{equation*} Since the kernel \(\tilde K\) satisfies~\eqref{wqeJL6Xs}--\eqref{M4Xlk3oo} with constants \(\tilde \lambda = \lambda r^{-s}\) and \(\tilde \Lambda = \Lambda r^{-s}\), the result follows.
\end{proof}

Now, we can prove that \(\curv_{K, E}(x)\) is well-defined in the principal value sense when \(E\) is touched at \(x\) by an exterior ball. We also give a non-principal value formula for \(\curv_{K, E}\) in this case. 

\begin{lemma} \label{JEUx52ma}
    Let~\(K\) satisfy~\eqref{wqeJL6Xs}-\eqref{M4Xlk3oo}, \(E\subset \R^d\) be a Borel set, \(B\) be a ball of radius \(r>0\) such that \(B\subset \R^d\setminus  E\), and \(x\in \partial E\cap \partial B\). Then the limit \begin{equation}
        \curv_{K,E}(x) = \lim_{\varepsilon \to 0^+} \int_{\R^d \setminus B_\varepsilon(x) } \tilde \chi_E(y) K(x-y) \, dy \label{0ynOArJU}
    \end{equation} exists and is given by \begin{equation}
        \curv_{K,E}(x) = -\curv_{K,B}(x) + 2 \int_{\R^d\setminus (E\cup B)} K(x-y) \, dy \in [-H_{K,B}(x),+\infty] \label{tNuyur0n}
    \end{equation}
\end{lemma}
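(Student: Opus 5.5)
The plan is to compare \(\tilde\chi_E\) pointwise with \(\tilde\chi_B\) on the truncated domains, and then use Lemma~\ref{XqTvAWsB} for the ball part together with monotone convergence for the remainder.

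Since \(B\subset \R^d\setminus E\), we have \(\chi_E\le \chi_{\R^d\setminus B}\). Writing \(\tilde\chi_E = 1-2\chi_E\) and \(\tilde\chi_B = 1-2\chi_B\), we get the pointwise identity
\begin{equation*}
\tilde\chi_E(y) = -\tilde\chi_B(y) + 2\bigl(1-\chi_E(y)-\chi_B(y)\bigr) = -\tilde\chi_B(y) + 2\chi_{\R^d\setminus(E\cup B)}(y),
\end{equation*}
which uses \(\chi_{E\cup B}=\chi_E+\chi_B\) because \(E\) and \(B\) are disjoint. Multiplying by \(K(x-y)\) and integrating over \(\R^d\setminus B_\varepsilon(x)\) gives
\begin{equation*}
\int_{\R^d\setminus B_\varepsilon(x)}\tilde\chi_E(y)K(x-y)\,dy = -\int_{\R^d\setminus B_\varepsilon(x)}\tilde\chi_B(y)K(x-y)\,dy + 2\int_{(\R^d\setminus B_\varepsilon(x))\setminus(E\cup B)}K(x-y)\,dy .
\end{equation*}
The split is legitimate because both terms on the right are well-defined for fixed \(\varepsilon\). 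The first integrand is bounded in absolute value by \(\Lambda|x-y|^{-d-s}\), which is integrable away from \(x\). The second integrand is nonnegative, so that term is well-defined in \([0,+\infty]\); it is finite for fixed \(\varepsilon\) for the same reason.

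Next we send \(\varepsilon\to0^+\). Since \(x\in\partial B\), Lemma~\ref{XqTvAWsB} shows the first term converges to \(-\curv_{K,B}(x)\), which is finite. The second integrand is nonnegative by~\eqref{wqeJL6Xs}, and the domains increase as \(\varepsilon\) decreases, so monotone convergence gives the limit \(2\int_{\R^d\setminus(E\cup B)}K(x-y)\,dy\in[0,+\infty]\). The sum of a finite limit and a limit in \([0,+\infty]\) exists in \((-\infty,+\infty]\). This proves that the limit in~\eqref{0ynOArJU} exists and equals the formula~\eqref{tNuyur0n}, and it lies in \([-\curv_{K,B}(x),+\infty]\) because the second term is nonnegative.

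There is no serious obstacle; the whole argument rests on the disjointness identity above. The only care needed is to treat the possibly infinite term through monotone convergence rather than by splitting an integral that might not converge.
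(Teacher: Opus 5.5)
Your proposal is correct and follows essentially the same route as the paper. The paper likewise subtracts the integral of \(\tilde\chi_{\R^d\setminus B}=-\tilde\chi_B\) from that of \(\tilde\chi_E\) over \(\R^d\setminus B_\varepsilon(x)\), identifies the difference as \(2\int_{(\R^d\setminus(E\cup B))\setminus B_\varepsilon(x)}K(x-y)\,dy\), and then passes to the limit via monotone convergence and Lemma~\ref{XqTvAWsB}.
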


\begin{proof}
    Fix \(\varepsilon>0\) and observe that \begin{equation*}
        \biggl \lvert \int_{\R^d \setminus B_\varepsilon(x) } \tilde \chi_E(y) K(x-y) \, dy \biggr \rvert \leq 2\Lambda \int_{\R^d \setminus B_\varepsilon }\frac{dy}{\lvert y\rvert^{d+s}} <+\infty. 
    \end{equation*} Then, it follows that, \begin{equation*}\begin{aligned}
         I_\varepsilon & := \int_{\R^d \setminus B_\varepsilon(x) } \tilde \chi_E(y) K(x-y) \, dy- \int_{\R^d \setminus B_\varepsilon(x) } \tilde \chi_{\R^d\setminus B }(y) K(x-y) \, dy \\ & = 2 \int_{\bigl ( \R^d \setminus (E \cup B) \bigr ) \setminus B_\varepsilon (x) } K(x-y)\, dy .
    \end{aligned}\end{equation*} By the monotone convergence theorem, the integral on the right-hand side above converges to \begin{equation*}
         2 \int_{\R^d \setminus (E \cup B)} K(x-y)\, dy \in [0,+\infty]
    \end{equation*} as \(\varepsilon \to 0^+\). Furthermore, by Lemma~\ref{XqTvAWsB}, \begin{equation*}
        J_\varepsilon := \int_{\R^d \setminus B_\varepsilon(x) } \tilde \chi_{\R^d\setminus B }(y) K(x-y) \, dy = - \int_{\R^d \setminus B_\varepsilon(x) } \tilde \chi_B(y) K(x-y) \, dy \to -\curv_{K,B}(x)
    \end{equation*} as \(\varepsilon \to 0^+\). Then, writing \begin{equation*}
        \int_{\R^d \setminus B_\varepsilon(x) } \tilde \chi_E(y) K(x-y) \, dy = I_\varepsilon + J_\varepsilon ,
    \end{equation*} we see that the limit in~\eqref{0ynOArJU} exists and equals~\eqref{tNuyur0n}.
\end{proof}

\section{Proof of Theorem \texorpdfstring{\ref{Waso0CUq}}{1.2}} \label{BQhFLLRN}

In this section, we collect several lemmata that we require for the proof of Theorem~\ref{Waso0CUq}. Our first lemma is an application of the Hardy--Littlewood maximal inequality which morally states that a set \(E\) of small measure is necessarily sparse at most points \(x\).

\begin{lemma}
\label{lem:hardylittlewood}
Let \(\alpha >0\), \(R>0\), and \(E\subset \R^d \) be a Borel set. There exists \(\vartheta>0\) such that if 
\begin{equation}\label{eq:hardylittlewood:a}
 \lvert E \cap B_R \rvert \le \vartheta \alpha   \lvert B_R \rvert ,
\end{equation}
then the measure of the set
\begin{equation}\label{eq:hardylittlewood}
 D_\alpha = \{x \in B _{R/4} \text{ s.t. } \lvert E \cap B_r(x) \rvert \le \alpha \lvert B_r \rvert \text{ for all } r \in (0, \tfrac{1}{2} R] \}
\end{equation}
is at least \(\frac{1}{2} \lvert B_{R/4} \rvert\). The constant \(\vartheta>0\) depends only on \(d\).
\end{lemma}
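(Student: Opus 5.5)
Apply the weak-type $(1,1)$ Hardy--Littlewood maximal inequality to the localized function $f = \chi_{E\cap B_R}$. The first step is to observe that for $x \in B_{R/4}$ and $r \in (0,\tfrac12 R]$ we have $B_r(x) \subset B_{3R/4} \subset B_R$. Hence $\lvert E \cap B_r(x)\rvert = \lvert (E\cap B_R)\cap B_r(x)\rvert$, and the condition defining $D_\alpha$ only involves $f$. Consequently, the complement $B_{R/4}\setminus D_\alpha$ is contained in the set of points $x\in B_{R/4}$ for which there is some $r\in(0,\tfrac12R]$ with
\begin{equation*}
\frac{1}{\lvert B_r\rvert}\int_{B_r(x)} f(y)\,dy > \alpha ,
\end{equation*}
which in turn is contained in $\{ \mathcal M f > \alpha\}$, where $\mathcal M$ denotes the centred Hardy--Littlewood maximal function.

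The second step is to invoke the classical weak-type estimate $\lvert \{\mathcal M f > \alpha\}\rvert \le C_d\, \alpha^{-1} \lVert f\rVert_{L^1}$, with $C_d$ depending only on $d$ (for instance $C_d = 5^d$ via the Vitali covering lemma). Combined with the assumption \eqref{eq:hardylittlewood:a}, this gives
\begin{equation*}
\lvert B_{R/4}\setminus D_\alpha\rvert \le \frac{C_d}{\alpha}\lvert E\cap B_R\rvert \le C_d\,\vartheta\,\lvert B_R\rvert = 4^d C_d\,\vartheta\,\lvert B_{R/4}\rvert .
\end{equation*}
Choosing $\vartheta = (2\cdot 4^d C_d)^{-1}$ makes the right-hand side at most $\tfrac12\lvert B_{R/4}\rvert$, so $\lvert D_\alpha\rvert \ge \tfrac12 \lvert B_{R/4}\rvert$, as claimed. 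Measurability of $D_\alpha$ is not an issue: $D_\alpha$ is in fact closed relative to $B_{R/4}$, since $x\mapsto \lvert E\cap B_r(x)\rvert$ is continuous for each $r$. Alternatively, one can simply bound the outer measure of the complement.

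There is no real obstacle here. The only point requiring care is the localization in the first step: restricting to $f=\chi_{E\cap B_R}$ is what allows the global hypothesis on $\lvert E\cap B_R\rvert$ to control the maximal function. This localization relies on the radii being at most $R/2$ and the centres lying in $B_{R/4}$, so that all the balls involved stay inside $B_R$. It is also important that $\vartheta$ is independent of $\alpha$, which is exactly what the $\alpha^{-1}$ factor in the weak-type bound provides.
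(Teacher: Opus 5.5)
Your proposal is correct and follows essentially the same argument as the paper. Both localize to $f=\chi_{E\cap B_R}$, use that $B_r(x)\subset B_R$ for $x\in B_{R/4}$ and $r\le R/2$, and apply the weak-type $(1,1)$ maximal inequality with $\vartheta=(2\cdot 4^d C_d)^{-1}$; your remark on the measurability of $D_\alpha$ is a small point the paper leaves implicit.
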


\begin{proof}
Let \(f = \ind_{E \cap B_R}\), and let \(M f\) be the corresponding centred Hardy--Littlewood maximal function:
\begin{equation*}\begin{aligned}
 M f(x) & = \sup \biggl\{ \frac{1}{\lvert B_r \rvert} \int_{B_r(x)} \lvert f(y) \rvert dy \text{ s.t. } r > 0 \biggr\} \\
 & = \sup \biggl\{ \frac{\lvert E \cap B_R \cap B_r(x ) \rvert}{\lvert B_ r \rvert} \text{ s.t. } r > 0 \biggr\} .
\end{aligned}\end{equation*}
By the Hardy--Littlewood maximal inequality, there is a constant \(C>0\) depending only on \(d\) such that
\[
 \lvert \{ x \in \R^d \text{ s.t. } M f(x) > \alpha \} \rvert \le C \, \frac{\lVert f \rVert_{L^1(\R^d)} }{\alpha} = C \, \frac{\lvert E \cap B_R\rvert}{\alpha} ,
\] so~\eqref{eq:hardylittlewood:a} implies \begin{equation}
     \lvert \{ x \in \R^d \text{ s.t. } M f(x) > \alpha \} \rvert \le C \vartheta \lvert B_R \rvert = 4^d \vartheta \lvert B_{R/4} \rvert. \label{Xh7nBdPF}
\end{equation}
Next, observe that \begin{equation}
    \{ x \in B_{R/4} \text{ s.t. } M f(x) \le \alpha \} \subset D_\alpha . \label{NWEe2LAv}
\end{equation} Indeed, if \(x\in B_{R/4}\) and \(Mf(x) \leq \alpha\), then for all \(r\in (0, R/2]\), we have \(B_r(x) \subset B_R\), so \begin{equation*}
  \frac{\lvert E  \cap B_r(x ) \rvert}{\lvert B_ r \rvert}= \frac{\lvert E \cap B_R \cap B_r(x ) \rvert}{\lvert B_ r \rvert} \leq M f(x) \leq \alpha .
\end{equation*}
Hence,~\eqref{Xh7nBdPF} and~\eqref{NWEe2LAv} imply \begin{equation*}\begin{aligned}
    \lvert D_\alpha  \rvert & \ge \lvert \{ x \in B_{R/4} \text{ s.t. } M f(x) \le \alpha \} \rvert  \\
    &= \lvert B_{R/4} \rvert - \lvert \{ x \in \R^d : M f(x) > \alpha \} \rvert \\
    &\geq  (1-4^d C \vartheta) \lvert B_{R/4}\rvert .
\end{aligned}\end{equation*} Choosing \(\vartheta=\frac{1}{2} (4^d C)^{-1}\), we obtain the desired inequality.
\end{proof}

Next, we present a simple rearrangement inequality well-known to experts. The result in the full space is given in \cite[Lemma 6.1]{di_nezza_hitchhikers_2012} or on a set of finite \((d-1)\)-Hausdorff measure in \cite[Lemma 3.2]{cabre_fractional_2023}. We give the proof here for completeness, but it is identical to the one given in \cite[Lemma 3.2]{cabre_fractional_2023} with the Lebesgue measure replacing the Hausdorff measure.

\begin{lemma} \label{UMd4Dp6e}
Let \(\Omega \subset \R^d\) be open with \(\lvert\Omega\rvert < +\infty\), \(x\in \R^d\), and \(E\) be a Borel set. If \(\rho>0\) is such that \begin{equation}
    \lvert \Omega \cap B_\rho(x) \rvert = \lvert  \Omega\cap E \rvert \label{ATiwjvpo}
\end{equation} then \begin{equation*}
    \int_{ \Omega  \setminus  E }  \frac{dy}{\lvert x-y\rvert^{d+s}} \geq \int_{ \Omega  \setminus  B_\rho (x) }  \frac{dy}{\lvert x-y\rvert^{d+s}}.
\end{equation*}
\end{lemma}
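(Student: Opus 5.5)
The plan is the usual bathtub rearrangement argument. We compare the two sets \(\Omega\setminus E\) and \(\Omega\setminus B_\rho(x)\), both subsets of \(\Omega\), using the fact that the weight \(w(y)=\lvert x-y\rvert^{-(d+s)}\) is radially decreasing about \(x\). Points of \(\Omega\) inside \(B_\rho(x)\) carry weight at least \(\rho^{-(d+s)}\), and points outside carry weight at most \(\rho^{-(d+s)}\).

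\textbf{Step 1: reduce to comparing two difference sets.} Removing the common part \(\Omega\setminus(E\cup B_\rho(x))\) from both integrals, it suffices to show
\begin{equation*}
\int_{(\Omega\cap B_\rho(x))\setminus E} w\,dy \;\ge\; \int_{(\Omega\cap E)\setminus B_\rho(x)} w\,dy .
\end{equation*}
We may assume the right-hand side is finite, since otherwise there is nothing to prove. (Indeed, in the case of interest both sides could be infinite, and then the inequality is trivial.)

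\textbf{Step 2: equal measures.} From \eqref{ATiwjvpo}, subtract \(\lvert \Omega\cap E\cap B_\rho(x)\rvert\) from both sides. This quantity is finite because \(\lvert\Omega\rvert<+\infty\). The result is
\begin{equation*}
\lvert(\Omega\cap B_\rho(x))\setminus E\rvert=\lvert(\Omega\cap E)\setminus B_\rho(x)\rvert .
\end{equation*}

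\textbf{Step 3: pointwise bound on the weight.} On the first set \(w\ge\rho^{-(d+s)}\), and on the second \(w\le\rho^{-(d+s)}\). Therefore
\begin{equation*}
\int_{(\Omega\cap B_\rho(x))\setminus E} w \ge \rho^{-(d+s)}\lvert(\Omega\cap B_\rho(x))\setminus E\rvert = \rho^{-(d+s)}\lvert(\Omega\cap E)\setminus B_\rho(x)\rvert \ge \int_{(\Omega\cap E)\setminus B_\rho(x)} w .
\end{equation*}
Adding back the common part \(\int_{\Omega\setminus(E\cup B_\rho(x))}w\) to both sides gives the claim. Note that \(\Omega\setminus E\) is the disjoint union of \(\Omega\setminus(E\cup B_\rho(x))\) and \((\Omega\cap B_\rho(x))\setminus E\), and that \(\Omega\setminus B_\rho(x)\) is the disjoint union of \(\Omega\setminus(E\cup B_\rho(x))\) and \((\Omega\cap E)\setminus B_\rho(x)\).

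\textbf{Main obstacle.} The only delicate point is bookkeeping with possibly infinite integrals, since \(w\) is not integrable near \(x\). This is why we work with nonnegative integrands and split into disjoint pieces rather than subtract integrals. The finiteness of \(\lvert\Omega\rvert\) is used exactly to justify the measure subtraction in Step 2.
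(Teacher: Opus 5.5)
Your proof is correct and follows the paper's argument essentially line by line. It uses the same subtraction to get $\lvert(\Omega\cap B_\rho(x))\setminus E\rvert=\lvert(\Omega\cap E)\setminus B_\rho(x)\rvert$, the same splitting off of the common piece $\Omega\setminus(E\cup B_\rho(x))$, and the same comparison of the weight with $\rho^{-(d+s)}$ inside and outside $B_\rho(x)$.

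Two small quibbles, neither affecting correctness:
\begin{itemize}
\item The reduced right-hand side is automatically finite, since it is bounded by $\int_{\R^d\setminus B_\rho(x)}\lvert x-y\rvert^{-d-s}\,dy<\infty$. So the remark ``otherwise there is nothing to prove'' is unneeded, and as logic it is not quite right.
\item The subtraction in Step 2 only needs $\lvert\Omega\cap E\cap B_\rho(x)\rvert\le\lvert B_\rho(x)\rvert<\infty$, not $\lvert\Omega\rvert<\infty$.
\end{itemize}
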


\begin{proof}
    By~\eqref{ATiwjvpo}, \begin{equation*}\begin{aligned}
    \lvert \Omega \cap  B_\rho (x) \cap E\rvert+ \lvert (\Omega \cap  B_\rho (x) )\setminus  E\rvert &=     \lvert \Omega \cap  B_\rho (x) \rvert\\
    &=  \lvert  \Omega\cap E \rvert \\
    &=  \lvert  \Omega\cap E \cap  B_\rho (x) \rvert + \lvert  (\Omega\cap E)\setminus   B_\rho (x) \rvert ,
    \end{aligned}\end{equation*} so \begin{equation*}
        \lvert (\Omega \cap  B_\rho (x) )\setminus  E\rvert =   \lvert  (\Omega\cap E)\setminus   B_\rho (x) \rvert.
    \end{equation*} Then, it follows that \begin{equation*}\begin{aligned}
        \int_{ \Omega  \setminus  E }  \frac{dy}{\lvert x-y\rvert^{d+s}}&=  \int_{ (\Omega  \setminus  E) \cap B_\rho (x)  }  \frac{dy}{\lvert x-y\rvert^{d+s}}+  \int_{ \Omega  \setminus ( E\cup B_\rho(x) )  }  \frac{dy}{\lvert x-y\rvert^{d+s}}\\
        &\geq \rho^{-d-s}\lvert  (\Omega \cap B_\rho(x) ) \setminus  E \rvert +  \int_{ \Omega  \setminus ( E\cup B_\rho(x) )  }  \frac{dy}{\lvert x-y\rvert^{d+s}} \\
        &= \rho^{-d-s}\lvert  (\Omega\cap E)\setminus   B_\rho (x) \rvert +  \int_{ \Omega  \setminus ( E\cup B_\rho(x) )  }  \frac{dy}{\lvert x-y\rvert^{d+s}} \\
        &\geq \int_{ (\Omega\cap E)\setminus   B_\rho (x)  }\frac{dy}{\lvert x-y\rvert^{d+s}}+  \int_{ \Omega  \setminus ( E\cup B_\rho(x) )  }  \frac{dy}{\lvert x-y\rvert^{d+s}} \\
        &= \int_{\Omega \setminus B_\rho(x)}  \frac{dy}{\lvert x-y\rvert^{d+s}} ,
    \end{aligned}\end{equation*}as required. 
\end{proof}

In our final lemma of this section, we prove that if \(E\) has small measure around a point in its boundary with an exterior touching ball then its nonlocal curvature must be large. 

\begin{lemma} \label{RchNzy7Y}
     Let \(M\geq 0\), \(K\) satisfy~\eqref{wqeJL6Xs}--\eqref{M4Xlk3oo}, and \(E\subset \R^d \) be a Borel set. Suppose that \(B = B_r(x_0)\subset \R^d\setminus E\) and \(x\in \partial E \cap \partial B\). Then, there exists \(\beta>0\) such that if \begin{equation}
         \lvert E \cap B_{2r}(x_0) \rvert \le \beta \lvert B_{2r}(x_0) \rvert \label{dh3rRc1i}
     \end{equation} then \(\curv_{K,E}(x) \ge (M + 1) r^{-s}\). The constant \(\beta>0\) depends only on \(d\), \(s\), \(\lambda\), \(\Lambda\), and \(M\).
\end{lemma}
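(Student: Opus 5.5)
By Lemma~\ref{JEUx52ma}, the curvature at \(x\) splits as
\[
\curv_{K,E}(x) = -\curv_{K,B}(x) + 2\int_{\R^d\setminus(E\cup B)} K(x-y)\,dy .
\]
Lemma~\ref{XqTvAWsB} gives \(\curv_{K,B}(x)\le \Lambda r^{-s}\curv_{s,B_1}(e_1)\). So the plan is to show that the nonnegative term is large. Precisely, we aim for
\[
2\int_{\R^d\setminus(E\cup B)} K(x-y)\,dy \ge \bigl(M+1+\Lambda\curv_{s,B_1}(e_1)\bigr) r^{-s}
\]
whenever \(\beta\) is small. Using the lower bound in~\eqref{wqeJL6Xs}, it suffices to bound \(\int_{\Omega\setminus E}\lvert x-y\rvert^{-d-s}\,dy\) from below, where we choose \(\Omega = B_{2r}(x_0)\setminus B\). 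This region is the part of the doubled ball that lies outside the touching ball.

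The first step is to rearrange. Since \(B\cap E=\varnothing\), we have \(\lvert\Omega\cap E\rvert = \lvert E\cap B_{2r}(x_0)\rvert \le \beta\lvert B_{2r}\rvert\). The function \(\rho\mapsto\lvert\Omega\cap B_\rho(x)\rvert\) is continuous, increasing from \(0\), and reaches \(\lvert\Omega\rvert=(2^d-1)\lvert B_r\rvert\). Hence for \(\beta<1-2^{-d}\) there is \(\rho>0\) with \(\lvert\Omega\cap B_\rho(x)\rvert = \lvert\Omega\cap E\rvert\). Lemma~\ref{UMd4Dp6e} then gives
\[
\int_{\Omega\setminus E}\frac{dy}{\lvert x-y\rvert^{d+s}} \ge \int_{\Omega\setminus B_\rho(x)}\frac{dy}{\lvert x-y\rvert^{d+s}} .
\]

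The second step, which is the geometric core, is to show two things:
\begin{itemize}
\item \(\rho\to0\) as \(\beta\to 0\), with \(\rho\le C\beta^{1/d} r\);
\item the right-hand side above blows up like \(c\,\rho^{-s}\), up to an error of order \(r^{-s}\).
\end{itemize}
Both follow from a lower density of \(\Omega\) at the boundary point \(x\). Near \(x\), the set \(\Omega\) contains a fixed fraction of every small ball \(B_t(x)\): the complement of a ball at a point on its boundary occupies at least half of \(B_t(x)\) up to a small correction, and \(B_t(x)\subset B_{2r}(x_0)\) for \(t\le r\). Concretely, we will verify that \(\lvert\Omega\cap B_t(x)\rvert\ge c_d t^d\) for all \(t\in(0,r]\); a cone with vertex \(x\) pointing away from \(x_0\) would do. This gives \(c_d\rho^d\le\beta\lvert B_{2r}\rvert\), hence \(\rho\le C\beta^{1/d}r\) once \(\beta\) is small so that \(\rho\le r\). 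Integrating over dyadic annuli \(B_{2^{k+1}\rho}(x)\setminus B_{2^k\rho}(x)\), using the same cone inside each annulus up to scale \(r\), yields
\[
\int_{\Omega\setminus B_\rho(x)}\frac{dy}{\lvert x-y\rvert^{d+s}} \ge c\,(\rho^{-s}-r^{-s}) \ge c' \beta^{-s/d} r^{-s} - c\,r^{-s}.
\]

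Finally, combining the two steps gives
\[
\curv_{K,E}(x)\ge 2\lambda\bigl(c'\beta^{-s/d}-c\bigr)r^{-s}-\Lambda\curv_{s,B_1}(e_1)r^{-s},
\]
and choosing \(\beta\) small in terms of \(d,s,\lambda,\Lambda,M\) yields \(\curv_{K,E}(x)\ge (M+1)r^{-s}\). The main obstacle I expect is making the cone lower bound explicit and uniform, so that \(\Omega\cap(B_{2t}(x)\setminus B_t(x))\) has measure at least \(c_d t^d\) for \(t\le r/2\). Placing a cone of fixed aperture around the direction \(x-x_0\) should settle it.
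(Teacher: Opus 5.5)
Your proof is correct and follows the paper's own argument. Both use the splitting of Lemma~\ref{JEUx52ma} with the bound of Lemma~\ref{XqTvAWsB}, then the rearrangement Lemma~\ref{UMd4Dp6e} to replace $E$ by a ball $B_\rho(x)$, and then a lower density bound for the complement of $B$ at $x$, which gives both $\rho\le C\beta^{1/d}r$ and the $\rho^{-s}$ blow-up. The paper takes $\Omega=B_r(x)\setminus B$ rather than $B_{2r}(x_0)\setminus B$, which makes no real difference.

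The step you flag as the main obstacle is immediate. For every $y\in B$ one has $(y-x)\cdot(x_0-x)>0$, so the whole half-ball $\{y\in B_r(x): (y-x)\cdot(x_0-x)\le 0\}$ lies in $\Omega$. Hence no cone or dyadic decomposition is needed, and $\int_{\Omega\setminus B_\rho(x)}\lvert x-y\rvert^{-d-s}\,dy\ge\frac12\int_{B_r(x)\setminus B_\rho(x)}\lvert x-y\rvert^{-d-s}\,dy$ follows directly, exactly as in the paper.
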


\begin{proof}
  It follows from Lemma~\ref{JEUx52ma} that \begin{equation*}
        \curv_{K,E} (x) =-\curv_{K,  B}  (x) + 2 \int_{\R^d \setminus  ( E\cup B) } K(x-y) \, dy 
    \end{equation*} By Lemma~\ref{XqTvAWsB}, it follows that \begin{equation*}
        \curv_{K,E} (x) \ge -C_1  r^{-s} + 2 \int_{\R^d \setminus  ( E\cup B  ) } K(x-y) \, dy 
    \end{equation*} for some \(C_1 > 0\) which depends only on \(d\), \(s\) and \(\Lambda\). We claim that, assuming~\eqref{dh3rRc1i} for some \(\beta>0\) implies \begin{equation*}
        \int_{\R^d \setminus  ( E\cup B ) } K(x-y) \, dy \ge C_2  r^{-s} \bigl ( C_3 \beta ^{-s/d} -1  \bigr )
    \end{equation*} for some \(C_2, C_3>0\) which depend only on \(d\), \(s\) and \(\lambda\). Once we have established the claim, we obtain \begin{equation*}
         \curv_{K,E} (x) \ge  r^{-s} \bigl ( -C_1  + C_2 ( C_3 \beta ^{-s/d} -1 ) \bigr ) =(M+1)r^{-s}
    \end{equation*} by choosing \begin{equation*}
        \beta = \biggl (  \frac{C_2 C_3}{M+1+C_1+C_2}\biggr )^{d/s} .
    \end{equation*}

To prove the claim, we use that 
    \begin{equation*}\begin{aligned}
        I:=\int_{\R^d \setminus  ( E\cup B ) } K(x-y) \, dy &\ge \lambda \int_{\R^d \setminus  ( E\cup B) }  \frac{dy}{\lvert x-y\rvert^{d+s}} \\
        &\ge \lambda \int_{ \bigl ( B_r(x) \setminus B \bigr ) \setminus  E }  \frac{dy}{\lvert x-y\rvert^{d+s}}
    \end{aligned}\end{equation*} Next, if \(\rho \in (0,r)\) is such that \begin{equation}
        \bigl\lvert \bigl ( B_r(x) \setminus B \bigr ) \cap B_\rho(x) \bigr\rvert = \bigl\lvert \bigl ( B_r(x) \setminus B \bigr ) \cap  E \bigr\rvert  \label{dNIUKzja}
    \end{equation} then by Lemma~\ref{UMd4Dp6e} with \(\Omega = B_r(x) \setminus B\), we have  \begin{equation*}
         I\geq \lambda \int_{ \bigl ( B_{r}(x) \setminus B \bigr ) \setminus  B_\rho (x) }  \frac{dy}{\lvert x-y\rvert^{d+s}}.
    \end{equation*} Moreover, since \( B_r(x)\setminus B \) contains a (solid) hemisphere of \(B_r(x)\), it further follows that \begin{equation}
       I \geq \frac{\lambda}{2} \int_{  B_r(x)  \setminus  B_\rho(x) }  \frac{dy}{\lvert x - y\rvert^{d+s}} =C_4 \bigl ( \rho^{-s} - r^{-s} \bigr ) , \label{qtmEqgRU}
    \end{equation} with \(C_4>0\) depending only on \(d\), \(s\) and \(\lambda\). Next, since \(E\cap B = \varnothing\) and \(B_r(x) \subset B_{2r}(x_0)\), it follows from~\eqref{dNIUKzja} and~\eqref{dh3rRc1i} that  \begin{equation*}\begin{aligned}
        \lvert B_\rho(x) \setminus B \rvert & = \bigl\lvert \bigl ( B_r(x) \setminus B \bigr ) \cap B_\rho(x) \bigr\rvert \\ & = \bigl\lvert \bigl ( B_r(x) \setminus B \bigr ) \cap  E \bigr\rvert \\ & = \lvert B_r (x)  \cap  E \rvert \\ & \le \lvert B_{2r} (x_0)  \cap  E \rvert \\ & \le \beta \lvert B_{2r}(x_0) \rvert \\ & = \beta \lvert B_1 \rvert (2r)^d .
    \end{aligned}\end{equation*} On the other hand, \(B_\rho(x) \setminus B\) contains a (solid) hemisphere of \(B_\rho(x)\), and so \begin{equation*}
        \lvert B_\rho(x) \setminus B \rvert \ge \tfrac{1}{2} \lvert B_\rho(x) \rvert = \tfrac{1}{2} \rho^d \lvert B_1 \rvert .
    \end{equation*} This leads to \(\tfrac{1}{2} \rho^d \le \beta (2r)^d\), so \(\rho \le 2r (2\beta)^{1/d}\). This and~\eqref{qtmEqgRU} proves that \begin{equation*}
        I \geq C_4 \bigl ( 2^{-s} (2\beta)^{-s/d} - 1 \bigr) r^{-s} ,
    \end{equation*} which is exactly the claim.
\end{proof}

We are now in a position to give the proof of Theorem~\ref{Waso0CUq}.

\begin{proof}[Proof of Theorem~\ref{Waso0CUq}]
 For the sake of contradiction, assume that \begin{equation*}
     \lvert E \cap B_R\rvert < \delta \lvert B_R \rvert 
\end{equation*} for some small \(\delta>0\), and fix \(\alpha \in (0,1)\), both to be chosen later. Let \begin{equation*}
    D_\alpha = \{x \in B _{R/4} \text{ s.t. } \lvert E \cap B_r(x) \rvert \le \alpha \lvert B_r \rvert \text{ for all } r \in (0, \tfrac{1}{2} R] \}
\end{equation*} as in~\eqref{eq:hardylittlewood}. By Lemma~\ref{lem:hardylittlewood}, there exists \(\vartheta > 0\) depending only on \(d\) such that if \(\delta \le \vartheta \alpha\), then \begin{equation}
    \lvert D_\alpha \rvert \ge \frac 12 \lvert B_{R/4} \rvert >0. \label{0cJF4yr9}
\end{equation} We claim that if \(\alpha\) is less than some constant depending only on \(d\), \(s\), \(\lambda\), \(\Lambda\) and \(M\) then \begin{equation}
    D_\alpha \subset \overline E . \label{3hl93LBN}
\end{equation} To prove the claim, suppose there exists \(x_0\in (\R^d\setminus \overline E) \cap D_\alpha\) and let \(r>0\) be the supremum over radii \(\rho>0\) such that \(B_\rho(x_0) \subset \R^d\setminus E\). Then, there exists \(x \in \partial E \cap \partial B_r(x_0)\). Furthermore, since \(0 \in \partial E \cap B_{R/4}(x_0)\), we have that \(r \le R/4\) and \(x \in B_{R/2}\). Since \(x_0\in D_\alpha\) and \(r\in (0,R/4]\), we have that \begin{equation*}
        \lvert E \cap B_{2r}(x_0) \rvert \le \alpha \lvert B_{2r}(x_0) \rvert.
     \end{equation*} Hence, by Lemma~\ref{RchNzy7Y}, there exists a constant \(\beta>0\) which depends only on \(d\), \(s\), \(\lambda\),  \(\Lambda\) and \(M\), such that if \(\alpha\le\beta\), then \(\curv_{K,E}(x) \geq (M+1) r^{-s}\), contradicting that \(\curv_{K,E}\le M r^{-s}\) in \(B_{R/2}\) in the viscosity sense.
     
We can now specify the values of \(\alpha\) and \(\delta\): we choose \(\alpha = \beta\) and \(\delta = \vartheta \alpha\). To complete the proof, observe that \(\alpha < 1\), and so \(D_\alpha\) contains no interior point of \(E\). It follows from~\eqref{3hl93LBN} that \(D_\alpha \subset \partial E\), and~\eqref{0cJF4yr9} shows that the boundary of \(E\) contains a subset \(D_\alpha\) of positive measure. This contradicts the assumption that \(\partial E\) has zero Lebesgue measure. 
\end{proof}

For completeness, we give the proof of Corollary~\ref{ED1qK30X} which is a simple application of Theorem~\ref{Waso0CUq}.

\begin{proof}[Proof of Corollary~\ref{ED1qK30X}]
Since \(B_{r/2}\subset B_R\), we have that \(E\) satisfies \(\curv_{K,E} \leq M R^{-s} \leq 2^s M r^{-s} \) in \(B_{r/2}\). Hence, the result follows from Theorem~\eqref{Waso0CUq} with \(M\) replaced with \(2^sM\) and \(R\) replaced with~\(r\).
\end{proof}

We also give the proof of Corollary~\ref{IP8ScC2r}: 

\begin{proof}[Proof of Corollary~\ref{IP8ScC2r}]
    Since \(\curv_{K,E} = 0\) in \(\R^d\) in the viscosity sense, we also have \(\curv_{K,\R^d\setminus E} = 0\) in \(\R^d\) in the viscosity sense, so Corollary~\ref{k9pGd2yU} applied to both \(E\) and \(\R^d\setminus E\) yields \begin{equation*}
        \min \{ \lvert E \cap B_r\rvert , \lvert B_r\setminus E \rvert \} \geq C_1 \lvert B_r \rvert \qquad \text{for all }r>0,
    \end{equation*} where \(C_1 > 0\) is a constant that depends only on \(d\), \(s\), \(\lambda\) and \(\Lambda\). For \(\alpha=1\), we conclude the result using the relative isoperimetric inequality~\eqref{yzR232ke} see \cite[Proposition 12.37]{maggi_sets_2012}. For \(0<\alpha<1\), in the proof of \cite[Lemma 3.1]{bellido_existence_2014}, they establish \begin{equation*}
        \int_{B_r} \biggl \lvert u(x) - \frac{1}{\lvert B_r \rvert} \int_{B_r}u (y) \, d y \biggr \rvert \, dx \leq C_2 r^\alpha \int_{B_r}\int_{B_r} \frac{\lvert u(x)-u(y)\rvert }{\lvert x-y\rvert^{n+\alpha}} \, dy \, dx
    \end{equation*} for all \(u\in L^1_{\mathrm {loc}}(\R^d)\) such that right-hand side above is finite, with a constant \(C_2 > 0\) that depends only on \(d\) and \(\alpha\). In particular, if \(u=\chi_E\) then \begin{equation*}
        \int_{B_r}\int_{B_r} \frac{\lvert \chi_E(x)-\chi_E(y)\rvert }{\lvert x-y\rvert^{n+\alpha}} \, dy \, dx =\frac 12 \int_{B_r}\int_{B_r} \frac{\lvert \tilde \chi_E(x)-\tilde \chi_E(y)\rvert }{\lvert x-y\rvert^{n+\alpha}} \, dy \, dx \leq \operatorname{Per}_\alpha(E;B_r)
    \end{equation*} by~\eqref{rKeVDPTi} and \begin{equation*}
         \int_{B_r} \biggl \lvert \chi_E(x) - \frac{1}{\lvert B_r \rvert} \int_{B_r}\chi_E (y) \, d y \biggr \rvert \, dx = \frac{2 \lvert E \cap B_r \rvert \cdot \lvert B_r\setminus E \rvert }{\lvert B_r \rvert }\geq \min \{ \lvert E \cap B_r\rvert , \lvert B_r\setminus E \rvert \} ,
    \end{equation*} so \begin{equation*}
        \operatorname{Per}_\alpha(E;B_r) \geq (C_2)^{-1} r^{-\alpha} \min \{ \lvert E \cap B_r\rvert , \lvert B_r\setminus E \rvert \}\geq C_1 (C_2)^{-1} r^{n-\alpha}= C_3 \operatorname{Per}_\alpha (B_r),
    \end{equation*}
    where \(C_3 > 0\) depends only on \(d\), \(s\), \(\alpha\), \(\lambda\) and \(\Lambda\).
\end{proof}

\section{Proof of Theorem~\texorpdfstring{\ref{thm:main}}{1.6}} \label{6yuqjipW}

In this section, we give the proof of Theorem~\ref{thm:main}. It is very similar to the proof of Theorem~\ref{Waso0CUq}, but requires two extra lemmas which strengthen Lemma~\ref{RchNzy7Y}. They show that if a set \(E\) is sparse at \(x\), then the nonlocal mean curvature of \(E\) is infinite at \(x\). The first lemma is as follows:

\begin{lemma}
\label{lem:shell}
Let \(K\) satisfy~\eqref{wqeJL6Xs}--\eqref{M4Xlk3oo}, \(E\subset \R^d \) be a Borel set, and \(x\in \partial E\). Then, there exist \(C, \gamma>0\) such that if 
\begin{equation}\label{eq:shell:a}
 \lvert E \cap B_r(x) \rvert \le \gamma \lvert B_r \rvert ,
\end{equation}
then \begin{equation*}
    \int_{B_r(x) \setminus B_{ r/2}(x)} \tilde \ind_E(y) K(x-y)  \, dy \ge  \frac{C}{r^s} \, .
\end{equation*}The constants \(\gamma\) and \(C\) depend only on \(d\), \(s\), \(\lambda\), and \(\Lambda\).
\end{lemma}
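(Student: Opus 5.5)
The approach is a direct computation on the annulus \(A = B_r(x)\setminus B_{r/2}(x)\). Since \(\tilde\ind_E = 1 - 2\ind_E\), we split
\begin{equation*}
 \int_{A} \tilde \ind_E(y) K(x-y)\, dy = \int_A K(x-y)\,dy - 2\int_{A\cap E} K(x-y)\,dy .
\end{equation*}
Note that \(x\in\partial E\) plays no role here; only the density assumption~\eqref{eq:shell:a} is used.

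For the first term, we use the lower bound in~\eqref{wqeJL6Xs}. Since \(\lvert x-y\rvert \le r\) on \(A\), this gives
\begin{equation*}
 \int_A K(x-y)\,dy \ge \lambda r^{-d-s}\lvert A\rvert = \lambda(1-2^{-d})\lvert B_1\rvert\, r^{-s}.
\end{equation*}
For the second term, we use the upper bound in~\eqref{wqeJL6Xs}. Since \(\lvert x-y\rvert\ge r/2\) on \(A\), together with~\eqref{eq:shell:a} this gives
\begin{equation*}
 2\int_{A\cap E} K(x-y)\,dy \le 2\Lambda (r/2)^{-d-s}\lvert E\cap B_r(x)\rvert \le 2^{1+d+s}\Lambda\gamma\lvert B_1\rvert\, r^{-s}.
\end{equation*}
On the annulus the kernel is bounded above and below by constant multiples of \(r^{-d-s}\), so no cancellation or principal value issues arise. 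This is why the shell is used rather than the full ball.

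Combining the two bounds, the integral is at least \(\lvert B_1\rvert\bigl(\lambda(1-2^{-d}) - 2^{1+d+s}\Lambda\gamma\bigr) r^{-s}\). We choose \(\gamma = \lambda(1-2^{-d})\,2^{-2-d-s}\Lambda^{-1}\), which makes the subtracted term at most half of the first. This yields the claim with \(C = \tfrac12\lambda(1-2^{-d})\lvert B_1\rvert\), and both constants depend only on \(d,s,\lambda,\Lambda\). There is no real obstacle; the only point to check is that the constant comparisons go in the right direction: the lower bound on \(K\) is needed where \(\lvert x-y\rvert\) is largest, and the upper bound on \(K\) where it is smallest.
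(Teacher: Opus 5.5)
Your proof is correct and follows the paper's argument. You use the same splitting \(\tilde\ind_E = 1-2\ind_E\) on the annulus, the lower bound on \(K\) for the first term, the bound \(\Lambda (r/2)^{-d-s}\lvert E\cap B_r(x)\rvert\) for the second, and a choice of \(\gamma\) that absorbs half of the first term. The only difference is cosmetic: you bound the first integral by \(\lambda r^{-d-s}\lvert A\rvert\), whereas the paper computes \(\int_A \lvert x-y\rvert^{-d-s}\,dy\) exactly, and this changes only the values of the constants.
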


\begin{proof}
Since \(\tilde \ind_E = 1 - 2 \ind_E\), we have \begin{equation*}
    \int_{B_r(x)\setminus B_{ r/2}(x) } \tilde  \chi_E (y) K(x-y) \, dy  
   = \int_{B_r(x)\setminus B_{r/2}(x)}  K(x-y) \, dy-2\int_{  \bigl(  B_r(x)\setminus B_{ r/2 }(x)  \bigr) \cap E}  K(x-y) \, dy 
\end{equation*}
We can estimate the former integral on the right-hand side by: \begin{equation*}
   \int_{B_r(x)\setminus B_{r/2}(x)}  K(x-y) \, dy \geq \lambda \int_{B_r(x)\setminus B_{r/2}(x)}  \frac{ dy}{\lvert x-y\rvert^{d+s}} = \frac{\lambda d \lvert B_1 \rvert }{s} \, \biggl(\frac{2^s}{r^s} - \frac{1}{r^s}\biggr) =  \frac{C_1}{r^s} ,
\end{equation*} where \(C_1 > 0\) depends only on \(d\), \(s\) and \(\lambda\). In the latter one, we estimate the integrand by \(( r/2)^{-d - s}\) and use the assumption~\eqref{eq:shell:a} to obtain: \begin{equation*}
    \int_{  \bigl(  B_r(x)\setminus B_{ r/2}(x)  \bigr) \cap E}  K(x-y) \, dy \le  \Lambda \int_{  \bigl(  B_r(x)\setminus B_{ r/2}(x)  \bigr) \cap E}  \frac{dy}{\lvert x-y\rvert^{d+s}}
    \le \frac{ \Lambda \lvert E \cap B_r( x) \rvert}{( r/2)^{d + s}} 
    \le\frac{  C_2 \gamma  }{ r^s} ,
\end{equation*} with \(C_2 > 0\) depending only on \(d\), \(s\) and \(\Lambda\). It follows that \begin{equation*}
    \int_{B_r(x)\setminus B_{r/2}(x) } \tilde \chi_E (y) K(x-y) \, dy  \geq  \frac{C_1- C_2\gamma }{r^s} 
\end{equation*} Choosing \(\gamma =C_1/(2C_2)\), we obtain \begin{equation*}
    \int_{B_r(x)\setminus B_{r/2}(x) } \tilde \chi_E (y) K(x-y) \, dy  \geq \frac{C_1 }{2r^s} ,
\end{equation*}
as desired.
\end{proof}

The second lemma is as follows:

\begin{lemma}
\label{lem:negative}
Let \(K\) satisfy~\eqref{wqeJL6Xs}--\eqref{M4Xlk3oo}, \(E \subset \R^d\) be a Borel set, \(x \in \partial E\), and \(r_0 >0\). Then there exists \(\gamma>0\) such that if \begin{equation*}
    \lvert E \cap B_r(x) \rvert \le \gamma \lvert B_r \rvert \qquad \text{for all }r\in (0,r_0]
\end{equation*} then \(\curv_{K,E}(x)=+\infty\). The constant \(\gamma\) depends only on \(d\), \(s\), \(\lambda\), and \(\Lambda\). 
\end{lemma}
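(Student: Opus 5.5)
We take \(\gamma\) to be the constant from Lemma~\ref{lem:shell} and decompose the punctured neighbourhood of \(x\) into dyadic annuli. Set \(r_k = 2^{-k} r_0\) for \(k\ge 0\) and \(A_k = B_{r_k}(x)\setminus B_{r_k/2}(x) = B_{r_k}(x)\setminus B_{r_{k+1}}(x)\). The hypothesis \(\lvert E\cap B_{r_k}(x)\rvert \le \gamma \lvert B_{r_k}\rvert\) holds for every \(k\ge 0\), since \(r_k\in(0,r_0]\). Lemma~\ref{lem:shell} therefore gives
\begin{equation*}
    \int_{A_k} \tilde\chi_E(y) K(x-y)\,dy \ge C r_k^{-s} = C r_0^{-s} 2^{ks}
\end{equation*}
for every \(k\ge 0\), with \(C>0\) depending only on \(d\), \(s\), \(\lambda\), \(\Lambda\).

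We then show that the limit in~\eqref{DVf4VFSd} exists and equals \(+\infty\). Split the integral over \(\R^d\setminus B_\varepsilon(x)\) into two pieces. The first is the part over \(\R^d\setminus B_{r_0}(x)\). It is a fixed finite number, bounded in absolute value by \(\Lambda\int_{\R^d\setminus B_{r_0}}\lvert y\rvert^{-d-s}\,dy\), so it does not affect whether the limit is infinite.

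The second is the part over \(B_{r_0}(x)\setminus B_\varepsilon(x)\). For \(\varepsilon\in(r_{N+1}, r_N]\), this region is the disjoint union of the full annuli \(A_0,\dots,A_{N-1}\) together with the partial annulus \(B_{r_N}(x)\setminus B_\varepsilon(x)\).

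The only step needing care is the partial annulus, because Lemma~\ref{lem:shell} covers whole shells only and \(\tilde\chi_E\) changes sign. We do not need its sign, only a crude bound:
\begin{equation*}
    \biggl\lvert \int_{B_{r_N}(x)\setminus B_\varepsilon(x)} \tilde\chi_E(y)K(x-y)\,dy \biggr\rvert \le \Lambda \int_{B_{r_N}\setminus B_{r_{N+1}}} \frac{dy}{\lvert y\rvert^{d+s}} = C' r_N^{-s},
\end{equation*}
using \(\varepsilon > r_{N+1}\).

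That crude bound is comparable to the gain from a single annulus, so it could in principle cancel a lot. It is nonetheless dominated by the full-annulus sum, because that sum is geometric:
\begin{equation*}
    \sum_{k=0}^{N-1} C r_0^{-s}2^{ks} \ge c\, r_0^{-s} 2^{(N-1)s} = c\,2^{-s} r_N^{-s} \cdot (\text{const}).
\end{equation*}
The constants need not beat \(C'\) outright, so we handle this more robustly. Write \(S_N = \sum_{k<N}(\cdots)\), which satisfies \(S_N \ge C r_0^{-s}(2^{Ns}-1)/(2^s-1)\). The partial-annulus error is at most \(C' r_0^{-s} 2^{Ns}\).

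This comparison is the main obstacle, and the lower bound on \(S_N\) alone does not resolve it when \(C'\) exceeds \(C/(2^s-1)\). The cleanest fix is to shrink \(\gamma\) so that Lemma~\ref{lem:shell} applies on annuli \(B_r\setminus B_{r/2}\) at every radius \(r\in(0,r_0]\), not just dyadic ones. Then, for \(\varepsilon\in(r_{N+1},r_N]\), we use annuli of ratio \(2\) built downward from \(r_0\) in a rescaled chain ending exactly at \(\varepsilon\). Concretely, take \(\rho_j = 2^{-j}\rho_0\) with \(\rho_0\in(r_0/2,r_0]\) chosen so that \(\rho_M=\varepsilon\).

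This is where the non-dyadic applicability comes in. The hypothesis \(\lvert E\cap B_r(x)\rvert\le\gamma\lvert B_r\rvert\) holds for all \(r\in(0,r_0]\), so Lemma~\ref{lem:shell} applies to every annulus in this chain. The remaining piece is \(B_{r_0}(x)\setminus B_{\rho_0}(x)\), whose contribution is bounded by \(\Lambda\int_{B_{r_0}\setminus B_{r_0/2}}\lvert y\rvert^{-d-s}\,dy\), a constant independent of \(\varepsilon\).

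Hence
\begin{equation*}
    \int_{\R^d\setminus B_\varepsilon(x)}\tilde\chi_E(y)K(x-y)\,dy \ge C\sum_{j=0}^{M-1}\rho_j^{-s} - C'' \ge c\,\varepsilon^{-s} - C'',
\end{equation*}
which tends to \(+\infty\) as \(\varepsilon\to0^+\). So the limit exists and \(\curv_{K,E}(x)=+\infty\), with \(\gamma\) the constant of Lemma~\ref{lem:shell} (no shrinking is actually needed, since that lemma already holds for every \(r\)).
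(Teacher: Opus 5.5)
Your final argument is correct and is essentially the paper's proof. The paper also uses the annuli \(B_{2^j r}(x)\setminus B_{2^{j-1} r}(x)\), anchored at the inner radius \(r\) (your \(\varepsilon\)), with top radius \(2^k r\in(r_0/2,r_0]\). It applies Lemma~\ref{lem:shell} to each annulus and keeps only the innermost one, which gives a lower bound of order \(r^{-s}\). Everything outside \(B_{2^k r}(x)\) is bounded below by \(-C r_0^{-s}\) using \(\tilde\chi_E\ge -1\). Your preliminary attempt with fixed dyadic annuli from \(r_0\) and the remark about shrinking \(\gamma\) are unnecessary and can be dropped.
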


\begin{proof}
  Let \(C, \gamma>0\) be as in Lemma~\ref{lem:shell}. For every \(r \in (0, \tfrac{1}{2} r_0]\) there is an integer \(k \ge 1\) such that \(r \in (2^{-k - 1} r_0, 2^{-k} r_0]\). If we denote \(\tilde B_j = B_{2^j r}(x)\), then \begin{equation*}
        \int_{\R^d \setminus B_r(x )}\tilde \chi_E(y) K(x-y) \, dy 
        =\biggl(\int_{\R^d \setminus \tilde B_k} + \sum_{j = 1}^k \int_{\tilde B_j \setminus \tilde B_{j-1}} \biggr) \tilde \ind_E (y)K(x-y)\, dy .
    \end{equation*} Since \(2^j r \in (0, r_0 ]\) for all \(j = 1, \ldots, k\), Lemma~\ref{lem:shell} applies to the latter integrals on the right-hand side: \begin{equation*}
        \int_{\tilde B_j \setminus \tilde B_{j-1}} \tilde \ind_E (y)K(x-y)\, dy \ge \frac{C}{(2^j r)^s} .
    \end{equation*} In the former one, we estimate \(\tilde \ind_E(y) \ge -1\) to obtain \begin{equation*}
          \int_{\R^d \setminus \tilde B_k}  \tilde \ind_E(y) K(x-y) \, dy  \ge - \Lambda \int_{\R^d \setminus \tilde B_k(x)} \frac{ dy} {\lvert x-y\rvert^{d+s}} = - \frac{\Lambda d \lvert B_1 \rvert }{s(2^k  r)^s}  
  \ge - \frac{2^s\Lambda d \lvert B_1 \rvert }{sr_0^s}=-\frac{C_1}{r_0} ,
    \end{equation*} where \(C_1 > 0\) depends only on \(d\), \(s\) and \(\Lambda\). It follows that \begin{equation*}
        \int_{\R^d \setminus B_r(x )}\tilde \chi_E(y) K(x-y) \, dy  \geq - \frac{C_1}{r_0^s} + \sum_{j = 1}^k \frac{C}{(2^j r)^s}\geq - \frac{C_1}{r_0^s} + 2^{-s} C r^{-s} ;
    \end{equation*} in the last inequality, we discarded all terms except \(j = 1\). Sending \(r \to 0^+\), we obtain \begin{equation*}
        \curv_{K,E}(x)=+\infty ,
    \end{equation*} as desired.
\end{proof}

Finally, we give the proof of Theorem~\ref{thm:main}.

\begin{proof}[Proof of Theorem~\ref{thm:main}] We argue exactly as in the proof of Theorem~\ref{Waso0CUq}, but rather than \(\alpha = \beta\), we choose \(\alpha = \min \{\beta, \gamma\}\), with \(\beta\) defined in the proof of Theorem~\ref{Waso0CUq} (and coming from Lemma~\ref{RchNzy7Y}) and \(\gamma\) the constant of Lemma~\ref{lem:negative}. The set \(D_\alpha\) is as in~\eqref{eq:hardylittlewood}, and the proof of Theorem~\ref{Waso0CUq} shows that \(D_\alpha \subset \partial E\). But then, for any \(x\in D_\alpha\), the assumptions of Lemma~\ref{lem:negative} are satisfied with \(r_0=\frac 12 R\), so \(\curv_{K,E}(x)=+\infty\). Hence, \begin{equation*}
    D_\alpha \subset \{ x\in \partial E \cap B_{R/4}\text{ s.t. } \curv_{K,E}(x)=+\infty \} 
\end{equation*} which completes the proof using that \( D_\alpha \) has positive Lebesgue measure. 
\end{proof}

\subsection*{Acknowledgements}

The authors would like to thank Serena Dipierro, Matteo Cozzi, and Enrico Valdinoci for their comments on a preliminary version of this article. 

\printbibliography
\vfill

\end{document}